\newcommand{\ver}{\today}
\newcommand{\bC}{{\mathbb C}}
\newcommand{\bN}{{\mathbb N}}
\newcommand{\bP}{{\mathbb P}}
\newcommand{\bQ}{{\mathbb Q}}
\newcommand{\bZ}{{\mathbb Z}}
\newcommand{\cF}{{\mathcal F}}
\newcommand{\cS}{{\mathcal S}}
\newcommand{\codim}{\hbox{\rm codim}\,}
\newcommand{\Z}{\mathbb{Z}}
\newcommand{\M}{M_{f,0}}
\newcommand{\Q}{\mathbb{Q}}
\newcommand{\C}{\mathbb{C}}
\newcommand{\E}{\mathcal{E}}
\newcommand{\Sf}{\mathcal{O}}
\newcommand{\X}{\chi}
\theoremstyle{plain}
\newtheorem{thm}{Theorem}[section]
\newtheorem{cor}[thm]{Corollary}
\newtheorem{lem}[thm]{Lemma}
\newtheorem{prop}[thm]{Proposition}
\theoremstyle{definition}
\newtheorem{rem}[thm]{Remark}
\title[Spectrum of hyperplane arrangements in 4 variables]{Spectrum of hyperplane arrangements in four variables}
\author{Youngho Yoon}
\address{Center for Geometry and Physics, Institute for Basic Science (IBS), 77 Cheongam-ro, Nam-gu, Pohang, Gyeongbuk, Korea 790-784} \email{mathyyoon@ibs.re.kr}
\date{\ver}
\keywords{Hodge spectrum, Hyperplane arrangement, Spectrum formula, Singularity}
\subjclass[2000]{Singularity}
\begin{document}

\maketitle

\begin{abstract}
One of the most important invariants in singularity theory is the Hodge spectrum. Calculating the Hodge spectrum is a difficult task and formulas exist for only a few cases. In this article the main result is the formula for reduced hyperplane arrangements in four variables. 

\end{abstract}

\section{Introduction}
The spectrum $Sp(f)$ of the germ of a hypersurface singularity $f:(\bC^n,\ 0)\rightarrow (\bC,\ 0)$ is a fractional Laurent polynomial $$Sp(f)=\sum_{\alpha\in\bQ} n_{f,\alpha} t^\alpha$$ with $n_{f,\alpha}\in \bZ$ which is defined from the Hodge filtration and the monodromy on the cohomology of the Milnor fiber of $f$ (see Section \ref{multiplicity}).

The spectra of generic hyperplane arrangements are known for $\alpha\in\bZ$ (see \cite{Sa8}-5.6). If the hyperplane arrangement germ is reduced with $n=2$,  it is easy to calculate. In \cite{Bu-S}, the reduced hyperplane arrangement case with $n=3$ is calculated and the case of $n=4$ is partially calculated. Here we give a full formula for the $n=4$ case. Beside the case of quasi-ordinary hypersurfaces (see \cite{P-V}), to our knowledge, this is the only other case of a complete Hodge spectrum computation for a large class of hypersurface singularities with the dimension of singular locus bigger than 1 in which we cannot use the Thom-Sebastiani formula.

We may assume $f$ is \emph{central} (i.e. $f$ has only linear forms as factors) since the spectrum $Sp(f)$ is defined locally. 

\begin{thm}\label{n4}
Assume  $f$ is a reduced central hyperplane arrangement with $d$ irreducible components in $\bC^4$. Let $m_{V}$ be the number of hyperplanes which pass through the edge $V$. Let $\cS$ be the set of dense edges excepting the hyperplanes in the arrangement.
Then we have the following formulas for $i\in \{1,\cdots,d\}$,
$$n_{f,\frac{i}{d}}= \eta_{0,i} (\langle\lceil im_{V}/d \rceil-1\rangle_{V\in \cS})\text{ and}$$
$$n_{f,1+\frac{i}{d}}=\eta_{1,i} (\langle\lceil im_{V}/d \rceil-1,\lfloor (d-i)m_{V}/d \rfloor\rangle_{V\in \cS}).$$
Similarly for $i\in \{0,\cdots,d-1\}$,
$$n_{f,4-\frac{i}{d}}=\eta_{0,i}(\langle\lfloor im_{V}/d \rfloor\rangle_{V\in \cS})\text{ and}$$
$$n_{f,3-\frac{i}{d}}=\eta_{1,i}( \langle\lfloor im_{V}/d \rfloor,\lceil (d-i)m_{V}/d \rceil-1\rangle_{V\in \cS}).$$
Otherwise $n_{f,\alpha}=0$. Here the functions $\eta_{0,i}$ and $\eta_{1,i}$ for each $i$ are defined as 
\begin{align*}
\eta_{0,i}(\langle u_{V}\rangle_{V\in \cS})&=\binom{i-1}{3}-\sum_{W\in\cS^{(3)}} \binom{u_{W}}{3}\\
&-\sum_{V\in\cS^{(2)}}\left( (i-3)\binom{u_{V}}{2}-2\binom{u_{V}}{3}\right)\\
&-\sum_{V\in\cS^{(2)}} \sum_{\substack{W\subset V\\ W\in \cS^{(3)}}}\left( 2 \binom{u_{V}}{3} -(u_{W}-2)\binom{u_{V}}{2}\right)+\delta_{0,i}
\end{align*}
and
\begin{align*}
\eta_{1,i}(\langle u_{V},v_{V}\rangle_{V\in \cS})&=(d-i-1)\binom{i-1}{2}-\sum_{W\in\cS^{(3)}}v_{W}\binom{u_{W}}{2}\\
&-\sum_{V\in\cS^{(2)}}\left(u_{V}v_{V}(i-2) +(d-i-1-2v_{V})\binom{u_{V}}{2}\right)\\
&+\sum_{V\in\cS^{(2)}}\sum_{\substack{W\subset V\\ W\in \cS^{(3)}}}\left( u_{V}v_{V}(u_{W}-u_{V})+v_{W}\binom{u_{V}}{2}\right),\\
\end{align*}
\noindent
where the set $\cS^{(k)}$ is the set of the codimension $k$ edges in $\cS$. The notations $\lfloor\cdot \rfloor$ and $\lceil\cdot\rceil$ mean floor and ceiling respectively. Also, $\delta_{0,i}=1$ if $0=i$ and $0$ otherwise.

\end{thm}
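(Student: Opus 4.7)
The plan is to compute each $n_{f,\alpha}$ via its Hodge-theoretic interpretation as the dimension of a graded piece of the cohomology of the Milnor fiber $F$ with a prescribed monodromy eigenvalue. Since $f$ is reduced central with $d$ hyperplanes the monodromy has order $d$, so $\alpha$ has denominator dividing $d$; write $\alpha$ in the form $p+i/d$ with $p\in\{0,1,2,3\}$. To access the Hodge pieces I would use the log resolution $\pi:\widetilde X\to\bC^4$ obtained by successively blowing up the dense edges in order of increasing dimension (origin, then proper transforms of edges in $\cS^{(3)}$, then of edges in $\cS^{(2)}$), yielding a simple normal crossings divisor whose components $E_V$ are indexed by $V\in\cS$ together with the strict transforms of the hyperplanes.

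On this resolution, the Budur--Saito formula for hyperplane arrangements (see \cite{Bu-S}) expresses $n_{f,\alpha}$ as an alternating sum of dimensions of cohomology groups of coherent sheaves on the exceptional divisors, where each term involves integers $\lceil im_V/d\rceil-1$ or $\lfloor im_V/d\rfloor$ coming from the pole/zero order of $\pi^{*}f$ along $E_V$; this is the source of the floor/ceiling factors in the statement. Restricting to $n=4$, the dense edges are stratified by codimension into $\cS^{(2)}$, $\cS^{(3)}$, and the origin. Hence the formula reduces to three geometric contributions: (i) the exceptional divisor over the origin, which is a blow-up of $\Proj^{3}$; (ii) divisors over codim-$3$ edges, which are $\Proj^{2}$-bundles; (iii) divisors over codim-$2$ edges, which are $\Proj^{1}$-bundles. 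Within each case one reduces to the lower-dimensional spectrum by a transverse slice, using the known cases $n=2$ and $n=3$ from \cite{Bu-S} and (\cite{Sa8}-5.6) for the generic part. The terms $\binom{i-1}{3}$ and $(d-i-1)\binom{i-1}{2}$ are the generic-arrangement contributions from the origin; the single sums over $\cS^{(3)}$ and $\cS^{(2)}$ correct for the non-generic edges.

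The main obstacle will be the bookkeeping of the intersections of exceptional divisors: when a codim-$3$ dense edge $W$ is contained in a codim-$2$ dense edge $V$, the divisors $E_W$ and $E_V$ meet along a subvariety in $\widetilde X$, and the naive sum of their separate contributions double-counts these intersections. The correction is exactly the double sum $\sum_{V\in\cS^{(2)}}\sum_{W\subset V,\,W\in\cS^{(3)}}$ appearing in $\eta_{0,i}$ and $\eta_{1,i}$; its coefficients $2\binom{u_V}{3}-(u_W-2)\binom{u_V}{2}$ and $u_V v_V(u_W-u_V)+v_W\binom{u_V}{2}$ should arise from the flag geometry around $W\subset V$ in $\bC^4$, reflecting that $m_W-m_V$ of the $m_W$ hyperplanes through $W$ are transverse to $V$. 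To derive these I would blow up in the order origin $\to\cS^{(3)}\to\cS^{(2)}$ and carefully track how $E_W$ intersects $E_V$ and the strict transforms of the hyperplanes passing through $W$ but not through $V$. Once this flag contribution is computed, substituting back and matching with the single-edge terms should yield the precise expressions for $\eta_{0,i}$ and $\eta_{1,i}$. The cases $n_{f,4-i/d}$ and $n_{f,3-i/d}$ follow by an analogous computation with the dual Hodge filtration, replacing ceilings by floors; the extra $\delta_{0,i}$ term accounts for the trivial Hodge summand $H^{0}(F)$ at $\alpha=4$.
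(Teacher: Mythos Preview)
Your overall framework is pointed in the right direction, but the route you propose diverges substantially from the paper's, and at the key step it has a genuine gap.

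\textbf{How the paper actually proceeds.} The paper does not decompose the answer by exceptional divisor and then reduce each piece to a lower-dimensional spectrum. Instead it works on the \emph{projective} resolution $\rho:\tilde Y\to\bP^3$ (not $\bC^4$), invokes Proposition~\ref{neuler} to write $n_{f,\alpha}$ as an Euler characteristic of $\Omega^p_{\tilde Y}(\log\tilde Z)\otimes\cO_{\tilde Y}(-i\tilde H+\sum\lfloor im_V/d\rfloor E_V)$, and then applies Hirzebruch--Riemann--Roch. The entire computation is carried out symbolically in the cohomology ring $H^\bullet(\tilde Y,\bQ)$, using the De~Concini--Procesi presentation: generators $a_V,b_W,c$ indexed by $\cS^{(2)},\cS^{(3)}$ and the hyperplane class, with the explicit relations~(\ref{rel}). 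One writes down $c(\tilde Y)$, $c(\Omega^1_{\tilde Y}(\log\tilde Z))$, $ch(\cO_{\tilde Y}(U))$, and $td(\tilde Y)$ in these generators, multiplies, and reads off the degree-$3$ part. The double sum over $W\subset V$ falls out mechanically from the multiplication table, in particular from the relations $a_Vb_W=-a_Vc$ and $a_V^2b_W=c^3$ for $W\subset V$; there is no separate ``flag geometry'' argument. Finally one checks that non-dense edges contribute zero, so one may shrink $\cS$ to the dense edges.

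\textbf{Where your plan has a gap.} The step ``reduce to the lower-dimensional spectrum by a transverse slice, using the known cases $n=2$ and $n=3$'' is where all the content hides, and as stated it does not work. The contribution of a codimension-$2$ dense edge $V$ to $\eta_{0,i}$ is $(i-3)\binom{u_V}{2}-2\binom{u_V}{3}$; a transverse slice at $V$ is an arrangement of $m_V$ lines in $\bC^2$, whose spectrum numbers are $j-1$ and $m_V-j-1$. There is no evident way to recover the degree-$3$ polynomial in $u_V$ with coefficient depending on $i$ from those linear expressions alone: the coupling between the global parameter $i$ (coming from the hyperplane class on $\bP^3$) and the local $u_V$ is precisely what the intersection product in $H^\bullet(\tilde Y)$ encodes, and it does not factor through the slice spectrum. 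Likewise the cross terms $2\binom{u_V}{3}-(u_W-2)\binom{u_V}{2}$ are not an inclusion--exclusion correction between separately computed $E_V$- and $E_W$-contributions; they arise because $a_V^s b_W^{s'}$ for $W\subset V$ is nonzero in the ring and must be expanded via the relations. Your proposal to ``track how $E_W$ intersects $E_V$'' would, if carried out, amount to rebuilding this multiplication table, at which point you are doing the paper's computation by another name. Without an explicit mechanism (HRR, a spectral sequence with identified $E_1$-page, or the motivic formula of \cite{Bu-S}) that converts the geometry into the exact cubic polynomials, the plan remains a heuristic.
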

In this paper, we will follow the convention, $\binom{t}{k}=t(t-1)\cdots(t-k+1)/k!$ for $k\in \bN$ and  any $t$. For the definitions of edge and dense edge see Section \ref{hyper}. In this theorem, $\cS$ can be replaced by any set of edges containing all the dense edges with codimension $\geq 2$ (see Section \ref{cS}). In \cite{Bu-S}, the formula for $n_{f,\frac{i}{d}}$ for $i\in \{1,\cdots,d\}$ had been proved in the setting that $\cS$ is the set of edges in the non normal crossing singular locus of $f$ (see Section \ref{hyper}). 

If $f$ is \emph{not essential} (i.e. $f$ is a function of fewer variables for a possibly different choice of coordinates), we can apply the Thom-Sebastiani formula (see \cite{Kul}-II (8.10.6)) and recover the formulas for $n=3$ and $n=2$.

\begin{cor}\label{n3}
Assume  $f$ is a reduced central hyperplane arrangement with $d$ irreducible components in $\bC^3$. Let $m_{V}$ be the number of hyperplanes which pass through the edge $V$. Let $\cS$ be the set of codimension 2 dense edges of the arrangement.
Then we have the following formulas for $i\in \{1,\cdots,d\}$:
\begin{align*}
n_{f,\frac{i}{d}}=& \binom{i-1}{2}-\sum_{V\in\cS}\binom{\lceil im_V/d\rceil -1}{2},\\
n_{f,1+\frac{i}{d}}&=(i-1)(d-i-1)-\sum_{V\in\cS}(\lceil im_V/d\rceil -1)(m_V-\lceil im_V/d\rceil),\text{ and}\\
n_{f,2+\frac{i}{d}}&=\binom{d-i-1}{2}-\sum_{V\in\cS}\binom{m_V-\lceil im_V/d\rceil}{2}-\delta_{i,d}.
\end{align*}
Otherwise $n_{f,\alpha}=0$.
\end{cor}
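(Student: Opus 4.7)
The plan is to deduce Corollary \ref{n3} from Theorem \ref{n4} by extending $f$ trivially to $\bC^4$ and invoking the Thom--Sebastiani formula for the resulting non-essential arrangement.

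Set $\tilde f(x_1,\ldots,x_4) := f(x_1,x_2,x_3)$, a reduced central arrangement in $\bC^4$ whose hyperplanes are $\tilde H_j = H_j\times\bC_{x_4}$. Every edge of $\tilde f$ has the form $V\times\bC_{x_4}$ for an edge $V$ of $f$, with codimension and $m_V$ preserved and density inherited. I would apply Theorem \ref{n4} to $\tilde f$ with $\widetilde{\cS}^{(2)} = \{V\times\bC_{x_4} : V\in\cS\}$, and with $\widetilde{\cS}^{(3)}$ equal either to $\emptyset$ (when $f$ is non-essential in $\bC^3$) or to the singleton $\{x_4\text{-axis}\}$ with $m_W = d$ (when $f$ is essential and indecomposable, so the origin of $\bC^3$ is a dense edge). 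The Thom--Sebastiani formula \cite{Kul}-II.(8.10.6) applied to this non-essential extension yields the spectrum shift
$$Sp(\tilde f)(t) = -t\cdot Sp(f)(t),\qquad\text{equivalently}\qquad n_{\tilde f,\alpha} = -n_{f,\alpha-1},$$
the sign reflecting the alternating convention used for non-isolated singularities.

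Under this identification, the three exponent ranges of the corollary correspond to
$$n_{f,i/d} = -n_{\tilde f,1+i/d},\quad n_{f,1+i/d} = -n_{\tilde f,3-(d-i)/d},\quad n_{f,2+i/d} = -n_{\tilde f,4-(d-i)/d},$$
evaluated respectively by $-\eta_{1,i}$, $-\eta_{1,d-i}$, and $-\eta_{0,d-i}$. The fourth range $n_{\tilde f, i/d} = \eta_{0,i}$ must vanish identically for $i\in\{1,\ldots,d\}$, providing a combinatorial consistency identity.

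The remaining work is the algebraic simplification. Using $\lceil im_V/d\rceil - 1 + \lfloor(d-i)m_V/d\rfloor = m_V - 1$ to write $v_V = m_V - 1 - u_V$, together with the special values $u_{x_4\text{-axis}} = i-1$ and $v_{x_4\text{-axis}} = d-i$ whenever that edge is present, one expands each $\eta$-expression term by term. The cross-sum $\sum_V\sum_{W\subset V}(u_Vv_V(u_W-u_V) + v_W\binom{u_V}{2})$, present only in the essential sub-case, precisely absorbs the $\sum_W v_W\binom{u_W}{2}$ contribution, so that the essential and non-essential sub-cases yield the same closed form. After the Thom--Sebastiani sign is taken into account, the resulting expressions match the single binomial-product formulas of the corollary; the correction $-\delta_{i,d}$ in $n_{f,2+i/d}$ arises from $\delta_{0,d-i}$ at $i = d$. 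The main obstacle is this algebraic collapse, particularly coordinating the leading term $(d-i-1)\binom{i-1}{2}$, the summands $u_Vv_V(i-2)$ and $(d-i-1-2v_V)\binom{u_V}{2}$, and the cross-terms so that, after the Thom--Sebastiani sign is applied, precisely the clean closed form of the corollary survives. Verifying the vanishing of $\eta_{0,i}$ in the fourth range provides an independent check.
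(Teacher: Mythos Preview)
Your approach is exactly the paper's: extend $f$ trivially to $\bC^4$, apply Theorem~\ref{n4}, and shift back via Thom--Sebastiani (Lemma~\ref{thom}). The matching of exponent ranges and the use of $m_W=d$ for the single codimension-$3$ edge are identical to what the paper does.

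One small imprecision worth flagging: your dichotomy ``non-essential'' versus ``essential and indecomposable'' omits the case where $f$ is essential but \emph{decomposable} in $\bC^3$ (e.g.\ $f=x_1\cdot f_2(x_2,x_3)$), in which the origin is a codimension-$3$ edge that is \emph{not} dense, so $\widetilde\cS^{(3)}=\emptyset$ while $f$ is still essential. More importantly, your claim that the $\widetilde\cS^{(3)}=\emptyset$ sub-case yields the same closed form as the $\widetilde\cS^{(3)}\neq\emptyset$ sub-case after the cross-sum absorption is not an algebraic identity: when the $W$-terms and the cross-sum are absent, the remaining expression $(d-i-1)\binom{i-1}{2}-\sum_V\bigl(u_Vv_V(i-2)+(d-i-1-2v_V)\binom{u_V}{2}\bigr)$ does \emph{not} reduce to $-\binom{i-1}{2}+\sum_V\binom{u_V}{2}$ for arbitrary $u_V,v_V$. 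It works here only because, whenever $\widetilde\cS^{(3)}=\emptyset$, the set $\widetilde\cS^{(2)}$ has at most one element with $m_V\in\{d,d-1\}$; the paper handles this by direct verification of those finitely many cases. Alternatively, you can use the remark after Theorem~\ref{n4} that $\cS$ may be enlarged to any set containing the dense edges, and simply include the $x_4$-axis whenever $f$ is essential; then the Case~1 simplification covers all essential $f$ uniformly, leaving only the genuinely non-essential case to check separately.
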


This had been proved in  \cite{Bu-S}.

\begin{cor}\label{n2}
Assume  $f$ is a reduced central hyperplane arrangement with $d$ irreducible components in $\bC^2$. 
Then we have the following formulas for $i\in \{1,\cdots,d\}$
\begin{align*}
n_{f,\frac{i}{d}}=& i-1\text{ and }n_{f,1+\frac{i}{d}}=d-i-1+\delta_{i,d}.
\end{align*}
Otherwise $n_{f,\alpha}=0$.
\end{cor}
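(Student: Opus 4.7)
The plan is to derive the spectrum directly from Steenbrink's formula for quasi-homogeneous isolated hypersurface singularities; in the $n=2$ case no stratification of edges is needed.

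First I would note that a central reduced arrangement in $\bC^{2}$ with $d$ components is a product $f=L_{1}\cdots L_{d}$ of $d$ pairwise non-proportional linear forms, so $f$ is homogeneous of degree $d$ and (by Euler's identity together with reducedness) has an isolated critical point at the origin. Treating $f$ as quasi-homogeneous with weights $w_{x}=w_{y}=1/d$ and weighted degree one, Steenbrink's formula gives
\begin{equation*}
Sp(f)(t)=\prod_{i=1}^{2}\frac{t^{1/d}-t}{1-t^{1/d}}=\Bigl(\sum_{k=1}^{d-1}t^{k/d}\Bigr)^{2}=\sum_{m=2}^{2d-2}c_{m}\,t^{m/d},
\end{equation*}
where $c_{m}=\#\{(j,k)\in\bZ^{2}:\,1\leq j,k\leq d-1,\ j+k=m\}$.

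An elementary count yields $c_{m}=m-1$ for $2\leq m\leq d$ and $c_{m}=2d-1-m$ for $d\leq m\leq 2d-2$. Substituting $m=i$ for $i\in\{1,\ldots,d\}$ (with $i=1$ giving the automatic $0$) recovers $n_{f,i/d}=i-1$, and substituting $m=d+i$ for $i\in\{1,\ldots,d-1\}$ recovers $n_{f,1+i/d}=d-i-1$; the remaining value $i=d$ places $m=2d$ outside the support of $Sp(f)$, which is precisely the role of the Kronecker correction $\delta_{i,d}$ in the statement. The vanishing $n_{f,\alpha}=0$ for all other $\alpha$ is immediate from the support of the product above.

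Following the remark preceding Corollary \ref{n3}, an alternative route is to embed $f$ as a non-essential central arrangement in four variables and deduce the formula from Theorem \ref{n4} via the Thom-Sebastiani formula (\cite{Kul}-II (8.10.6)). In either approach the only point of care is matching the normalization of the spectrum used in the paper with the one appearing in Steenbrink's product formula, which is a purely bookkeeping issue and presents no serious obstacle.
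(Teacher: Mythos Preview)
Your argument is correct and self-contained: a reduced central arrangement in $\bC^{2}$ is a homogeneous polynomial of degree $d$ with an isolated critical point at the origin, and Steenbrink's product formula for quasi-homogeneous isolated singularities immediately yields $Sp(f)=(\sum_{k=1}^{d-1}t^{k/d})^{2}$, from which the claimed multiplicities follow by the elementary count you give. The normalization you use does match the paper's, as one checks on $d=2$ (where both give $Sp(f)=t$).

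This is, however, a different route from the one taken in the paper. The paper deduces Corollary~\ref{n2} from Theorem~\ref{n4}: one regards $f(x_{1},x_{2})$ as a non-essential arrangement $h(x_{1},x_{2},x_{3},x_{4})$ in $\bC^{4}$, observes that $h$ has $\cS^{(3)}=\emptyset$ and (for $d\geq 3$) a single codimension-two dense edge $V=\{x_{1}=x_{2}=0\}$ with $m_{V}=d$, plugs these data into Theorem~\ref{n4}, and then applies Lemma~\ref{thom} (Thom--Sebastiani) to pass from $Sp(h)$ back to $Sp(f)$. Your approach is more elementary and independent of the machinery of the paper; the paper's approach, by contrast, serves as an internal consistency check on Theorem~\ref{n4}. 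You do allude to this alternative in your final paragraph, so you are aware of both routes.
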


For the generic hyperplane arrangements in the $n=4$ case, we do not have codimension 2 and 3 dense edges. Thus, we get the following result.
\begin{cor}\label{generic}
Assume  $f$ is a generic hyperplane arrangement with $d$ irreducible components in $\bC^4$. 
Then we have the following formulas for $i\in \{1,\cdots,d\}$
$$n_{f,\frac{i}{d}}= \binom{i-1}{3} \text{ and } n_{f,1+\frac{i}{d}}=(d-i-1)\binom{i-1}{2}.$$
For $i\in \{0,\cdots,d-1\}$, we have
$$n_{f,4-\frac{i}{d}}=\binom{i-1}{3}+\delta_{0,i} \text{ and } n_{f,3-\frac{i}{d}}=(d-i-1)\binom{i-1}{2}.$$
Otherwise $n_{f,\alpha}=0$. Here $\delta_{0,i}=1$ if $0=i$ and $0$ otherwise.
\end{cor}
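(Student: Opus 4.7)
The plan is to derive Corollary~\ref{generic} directly from Theorem~\ref{n4} by verifying that a generic arrangement in $\bC^4$ has no dense edges of codimension $2$ or $3$. Once this is established, we may take $\cS^{(2)} = \cS^{(3)} = \emptyset$ in the theorem (using the remark that $\cS$ can be replaced by any set of edges containing all dense edges of codimension $\geq 2$), so that every sum appearing in the definitions of $\eta_{0,i}$ and $\eta_{1,i}$ vanishes and only the leading binomial term survives.

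First I would verify the density claim. In a generic arrangement any $k$ hyperplanes with $k \leq 4$ intersect transversely in an edge of codimension exactly $k$; in particular each codimension~$2$ edge $V$ lies in precisely $2$ hyperplanes, and each codimension~$3$ edge lies in precisely $3$. After a linear change of coordinates on the quotient by $V$, the local arrangement is equivalent to the coordinate arrangement $\{xy = 0\} \subset \bC^2$ (respectively $\{xyz = 0\} \subset \bC^3$), which is a nontrivial direct product of one-dimensional arrangements. Since a dense edge is by definition one whose local arrangement is indecomposable as a direct product, no codimension~$2$ or~$3$ edge of a generic arrangement is dense.

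With $\cS^{(2)} = \cS^{(3)} = \emptyset$, the expressions of Theorem~\ref{n4} reduce to
\begin{align*}
\eta_{0,i} &= \binom{i-1}{3} + \delta_{0,i}, & \eta_{1,i} &= (d-i-1)\binom{i-1}{2},
\end{align*}
and substituting these into the four spectrum formulas of Theorem~\ref{n4} immediately yields the claimed identities; the indicator $\delta_{0,i}$ contributes only to $n_{f,4-i/d}$, since the range $i \in \{1,\ldots,d\}$ in the formula for $n_{f,i/d}$ excludes $i=0$. The only step requiring any thought is the density check, which reduces to an elementary decomposition of coordinate hyperplane arrangements, so no serious obstacle is anticipated.
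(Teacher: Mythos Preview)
Your proposal is correct and follows exactly the paper's own argument: the paper simply observes that a generic arrangement in $\bC^4$ has no codimension~$2$ or~$3$ dense edges and then reads off the formulas from Theorem~\ref{n4}. Your additional verification that the local arrangements at such edges are coordinate-type and hence decomposable, and your bookkeeping on the $\delta_{0,i}$ term, fill in details the paper leaves implicit but do not depart from its approach.
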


Consider \emph{decomposable} cases (i.e. after possibly a different change of coordinates, $f=f_1 f_2$ for two non-constant polynomials $f_1$ and $f_2$ in disjoint sets of variables).  According to \cite{Da}-Theorem 1.2, we know $n_{f,\alpha}=0$ except for $\alpha\in\bZ$ when the degrees of $f_1$ and $f_2$ are relatively prime. Here we give formulas for $\alpha\in\bZ$ in 4 variables.

\begin{cor}\label{3:1}
Assume $f(x_1, x_2, x_3, x_4)=f_1(x_1, x_2 ,x_3) f_2(x_4)$ for non-constant $f_1$ and $f_2$. Also, assume $f$ is a reduced central hyperplane arrangement with $d$ irreducible components in $\bC^4$. Then we have
\begin{align*}
n_{f,1}&=\binom{d-2}{2}-\sum_{V\in\cS^{(2)}}\binom{m_V-1}{2},\\
n_{f,2}&=-\binom{d-1}{2}+\sum_{V\in\cS^{(2)}}\binom{m_V-1}{2},\text{ and}\\ 
n_{f,3}&=d-1.
\end{align*}
Otherwise $n_{f,\alpha}=0$.
\end{cor}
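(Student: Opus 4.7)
\emph{Proof plan.} The strategy is to specialize Theorem~\ref{n4} to the decomposable setting and to invoke Denef's theorem for the vanishing at non-integer exponents. Since $f_2(x_4)$ is a reduced non-constant central arrangement in the single variable $x_4$, every linear factor must be a scalar multiple of $x_4$; reducedness then forces $f_2 = c\,x_4$, so $\deg f_2 = 1$ and $\deg f_1 = d-1$. In particular $\gcd(\deg f_1, \deg f_2) = 1$, and \cite{Da}-Theorem 1.2 gives $n_{f,\alpha} = 0$ for every non-integer $\alpha$.

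Next I enumerate the edges of $f$ needed to apply Theorem~\ref{n4}. Every codimension 2 edge of $f$ is either $W \times \bC_{x_4}$ for a codimension 2 edge $W$ of $f_1$, or $H \cap \{x_4 = 0\}$ for some hyperplane $H$ of $f_1$; in the second case only $H$ and $\{x_4 = 0\}$ contain the edge, so its localized arrangement splits as a direct product and the edge is not dense. Hence the codimension 2 dense edges of $f$ are precisely the extensions $W \times \bC_{x_4}$ of the codimension 2 dense edges of $f_1$, with identical multiplicities $m_V$. The only candidate codimension 3 dense edge is the $x_4$-axis $V_0 = \{x_1 = x_2 = x_3 = 0\}$, through which exactly the $d-1$ hyperplanes of $f_1$ pass, so $m_{V_0} = d - 1$; every other codimension 3 edge has the form $W \cap \{x_4 = 0\}$ and likewise decomposes as a product. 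Using the remark after Theorem~\ref{n4} that $\cS$ may be replaced by any set containing all dense edges of codimension $\geq 2$, I take $\cS^{(2)}$ to be the codimension 2 dense edges and $\cS^{(3)} = \{V_0\}$. Since each $V \in \cS^{(2)}$ has the form $W \times \bC_{x_4}$ with $W \ni 0$, it contains $V_0$, so every inclusion sum $\sum_{W \subset V,\,W \in \cS^{(3)}}$ in Theorem~\ref{n4} collapses to a single term with $W = V_0$.

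I then substitute into the four formulas of Theorem~\ref{n4}. For $\alpha = 1$ I use $n_{f, i/d}$ with $i = d$, giving $u_V = m_V - 1$ and $u_{V_0} = d - 2$; the two $\cS^{(2)}$-sums cancel the $\binom{m_V-1}{3}$ contributions, the constant $\binom{d-1}{3} - \binom{d-2}{3}$ collapses to $\binom{d-2}{2}$, and the surviving $\binom{m_V-1}{2}$ terms combine with coefficient $-1$. For $\alpha = 2$ I use $n_{f, 1 + i/d}$ with $i = d$: here $v_V = 0$ for every $V$, and $\eta_{1, d}$ reduces to $-\binom{d-1}{2} + \sum_V \binom{m_V-1}{2}$. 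For $\alpha = 3$ I use $n_{f, 3 - i/d}$ with $i = 0$: here $u_V = 0$, so every $\binom{u_V}{k}$ with $k \geq 1$ vanishes and only the leading term $(d-1)\binom{-1}{2} = d - 1$ survives. The vanishing at $\alpha = 0$ is automatic as it lies outside Theorem~\ref{n4}'s range, and at $\alpha = 4$ one checks directly $\eta_{0, 0}(\langle 0 \rangle) = \binom{-1}{3} + 1 = 0$. The main obstacle is the bookkeeping in the $\alpha = 1$ case, where three $\cS^{(2)}$-contributions with coefficients involving $d - 3$, $u_{V_0} - 2 = d - 4$, and $\pm 2$ must conspire to cancel the $\binom{m_V-1}{3}$ terms exactly and leave $-\binom{m_V-1}{2}$; the cancellation hinges on $u_{V_0}$ being precisely two less than $d$, a reflection of the single extra hyperplane supplied by $f_2$.
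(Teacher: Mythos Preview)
Your approach is essentially the paper's: specialize Theorem~\ref{n4} to this decomposition and read off the integer values. The paper, however, does not appeal to \cite{Da} for the non-integer vanishing; it plugs $i\in\{1,\dots,d-1\}$ directly into the specialized $\eta_{0,i}$ and $\eta_{1,i}$ and observes that $u_{W_\infty}=i-1$, $v_{W_\infty}=d-i-1$ force every term to cancel. Your shortcut via \cite{Da}-Theorem~1.2 (this is Dimca, not Denef) is legitimate and a bit cleaner.

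There is one small gap. You set $\cS^{(3)}=\{V_0\}$ and invoke the remark that $\cS$ may be enlarged, but that remark requires $\cS$ to consist of \emph{edges}. The subspace $V_0=\{x_1=x_2=x_3=0\}$ is an edge of the arrangement only when $f_1$ is essential in $\bC^3$; if $f_1$ depends (after a linear change of coordinates) on at most two variables, the common intersection of its hyperplanes has codimension $\leq 2$ in $\bC^4$ and $V_0$ does not lie in the intersection lattice at all, so it cannot be adjoined to $\cS$. The paper treats this degenerate situation separately: when $\cS^{(3)}=\emptyset$ one has either $\cS^{(2)}=\emptyset$ (so $f$ is generic with $2\leq d\leq 4$) or $\cS^{(2)}=\{V\}$ with $m_V\in\{d-1,d-2\}$, and in each case the formulas of Corollary~\ref{3:1} are checked directly from Theorem~\ref{n4}. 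You should add a line dispatching these boundary cases.
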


\begin{cor}\label{2:2}
Assume $f(x_1, x_2, x_3, x_4)=f_1(x_1, x_2) f_2(x_3, x_4)$ for non-constant $f_1$ and $f_2$. Also, assume $f$ is a reduced central hyperplane arrangement with $d$ irreducible components in $\bC^4$. Let $s_1$ and $s_2$ be the degrees of $f_1$ and $f_2$ respectively. If $gcd(s_1,s_2)=1$ then we have the following formulas for $i\in \{1,\cdots,d\}$:
$$n_{f,1}=(s_1-1)(s_2-1),\ n_{f,2}=1-s_1 s_2,\text{ and } n_{f,3}=s_1 + s_2-1.$$
Otherwise $n_{f,\alpha}=0$.
\end{cor}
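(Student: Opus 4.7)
The plan is to combine the decomposability-vanishing result of \cite{Da}-Theorem 1.2 with the main Theorem \ref{n4}. Since $f=f_1 f_2$ is decomposable in disjoint sets of variables with $\gcd(s_1,s_2)=1$, \cite{Da}-Theorem 1.2 immediately gives $n_{f,\alpha}=0$ for every $\alpha\notin\bZ$. Only the integer spectrum values remain, and among those, Theorem \ref{n4} itself produces expressions only for $\alpha\in\{1,2,3,4\}$; so it suffices to evaluate the theorem at $i=d$ (yielding $n_{f,1}$ and $n_{f,2}$) and at $i=0$ (yielding $n_{f,3}$ and $n_{f,4}$), and to check that the latter is zero.

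The key preparatory step is to determine the set $\cS$ of dense edges of the arrangement (with hyperplanes excluded). Assume for the main case that $s_1,s_2\geq 2$. The codimension-$2$ edges through which at least two hyperplanes pass are: $V_1=\{x_1=x_2=0\}$, met by $m_{V_1}=s_1$ hyperplanes of $f_1$; $V_2=\{x_3=x_4=0\}$, met by $m_{V_2}=s_2$ hyperplanes of $f_2$; and the planes $H_i\cap H_j$ for one hyperplane $H_i$ of $f_1$ and one $H_j$ of $f_2$. The third type is a normal crossing, so the sub-arrangement through it is decomposable and the edge is not dense. Every codimension-$3$ edge is of the form $V_i\cap H$ with $H$ a hyperplane from the opposite factor, and the sub-arrangement through it splits as the full $f_i$-arrangement together with the single transversal hyperplane $H$ in disjoint variables, so it too is decomposable. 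Therefore $\cS^{(2)}=\{V_1,V_2\}$ and $\cS^{(3)}=\emptyset$. The degenerate sub-cases $s_1=1$ or $s_2=1$ simply delete the corresponding $V_i$ from $\cS^{(2)}$, and the subsequent computation will still return the correct (zero or boundary) value.

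With $\cS^{(3)}=\emptyset$ the $\eta$-expressions collapse dramatically, since every double-sum term and every $\binom{u_W}{\cdot}$ contribution disappears. Substituting into Theorem \ref{n4} yields
\[
\eta_{0,d}=\binom{d-1}{3}-(d-3)\Bigl(\binom{s_1-1}{2}+\binom{s_2-1}{2}\Bigr)+2\Bigl(\binom{s_1-1}{3}+\binom{s_2-1}{3}\Bigr),
\]
\[
\eta_{1,d}=-\binom{d-1}{2}+\binom{s_1-1}{2}+\binom{s_2-1}{2},\qquad \eta_{1,0}=(d-1)\binom{-1}{2}=d-1,
\]
while $\eta_{0,0}(\langle 0\rangle)+\delta_{0,0}=\binom{-1}{3}+1=0$ confirms $n_{f,4}=0$. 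The computation of $\eta_{1,d}$ uses the fact that the second-slot argument is $\lfloor 0\cdot m_V/d\rfloor=0$, which sends the terms with factor $v_V$ to zero; the computation of $\eta_{1,0}$ uses that the first-slot argument is $\lfloor 0\rfloor=0$, which kills every term containing $\binom{u_V}{\cdot}$.

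The last step is the routine algebraic verification that, under $d=s_1+s_2$, the three nontrivial expressions simplify to $(s_1-1)(s_2-1)$, $1-s_1s_2$, and $s_1+s_2-1$ respectively. These are elementary polynomial identities in $s_1,s_2$ and present no genuine obstacle; the structural content of the proof lies entirely in the dense-edge identification in the second paragraph, where decomposability of $f$ is exactly what forces $\cS^{(3)}=\emptyset$ and restricts $\cS^{(2)}$ to the two obvious coordinate planes. Thus the hardest part is really combinatorial rather than computational.
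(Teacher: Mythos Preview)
Your argument is correct, but it differs from the paper's own proof in one substantive respect. You invoke \cite{Da}-Theorem 1.2 to dispose of all non-integer spectral numbers at once, and then evaluate Theorem \ref{n4} only at $i=0$ and $i=d$. The paper instead re-derives the non-integer vanishing \emph{internally} from Theorem \ref{n4}: after writing down the specialized $\eta_{0,i}$ and $\eta_{1,i}$ with $\cS^{(2)}=\{V_1,V_2\}$ and $\cS^{(3)}=\emptyset$, it observes that for $i\in\{1,\dots,d-1\}$ the expressions carry a common factor $\lceil is_1/d\rceil+\lceil is_2/d\rceil-i-1$ (respectively $\lfloor is_1/d\rfloor+\lfloor is_2/d\rfloor-i+1$), which vanishes precisely because $\gcd(s_1,s_2)=1$. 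This costs a little more algebra and forces a separate treatment of the small-degree cases $s_j\le 2$ via Corollaries \ref{generic} and \ref{3:1}, but it has the virtue of being self-contained and of exhibiting the coprimality hypothesis working directly inside the spectrum formula—an internal consistency check between Theorem \ref{n4} and Dimca's result. Your route is shorter and handles all degrees uniformly; the paper's route is independent of \cite{Da}.

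One minor imprecision worth flagging: you describe $V_1,V_2$ as dense whenever $s_1,s_2\ge 2$, but a codimension-$2$ edge meeting exactly two hyperplanes is normal crossing and hence not dense. This does not damage your proof, since by the remark following Theorem \ref{n4} (proved in Section \ref{cS}) one may take for $\cS$ any set of edges containing the dense ones; your choice $\{V_1,V_2\}$ always qualifies, and in the boundary case $s_j=2$ the extra term contributes zero anyway because $u_{V_j}=1$ kills every $\binom{u_{V_j}}{2}$ and $\binom{u_{V_j}}{3}$.
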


\noindent
{\bf Acknowledgements}: The author would like to thank his advisor, Nero Budur, as well as Melissa Davidson and Gabriel C. Drummond-Cole for constructive conversations about this paper and the Department of Mathematics at the University of Notre Dame for supporting him in his doctoral studies. He also thanks Alexandru Dimca for a correction and useful comments. This work was partially supported by the IBS (CA1305-02).

\section{Spectrum}

\subsection{Milnor fiber and Hodge spectrum}\label{multiplicity}
Let $f: (\bC^n,0) \rightarrow (\bC,0)$ be the germ of a non-zero holomorphic function. Then the Milnor fiber $M_{f,0}$ is defined as 
$$M_{f,0}=\{z\in \bC^n||z|<\epsilon \; and\; f(z)=t\} \text{ for } 0<|t|\ll\epsilon\ll 1.$$ 

The cohomology groups $H^*(M_f,\bC)$ carry canonical mixed Hodge structures such that the semi-simple part $T_s$ of the monodromy acts as an automorphism of finite order of these mixed Hodge structures (see \cite{St}-12.1.3). The eigenvalues $\lambda$ of the monodromy action on $H^*(M_f,\bC)$ are roots of unity. We define the {\it spectrum multiplicity} of $f$ at $\alpha\in \Q$ to be 
$$n_{f,\alpha}=\sum_{j\in \Z} (-1)^{j-n+1}\dim Gr_F^p \tilde{H}^{j}(\M,\C)_\lambda$$
$$\text{with $p=\lfloor n-\alpha \rfloor$, $\lambda=\exp(-2\pi i\alpha ),$}$$
where $\tilde{H}^j(\M,\C)_\lambda$ is the $\lambda$-eigenspace of the reduced cohomology under $T_s$ and $F$ is the Hodge filtration. It is known that $n_{f,\alpha}=0$ for $\alpha\not\in (0,n)$ (see \cite{BS-MVS}).
The {\it Hodge spectrum} of the germ $f$ is the fractional Laurent polynomial
$$Sp(f):=\sum_{\alpha \in \Q} n_{f,\alpha}t^\alpha.$$

\subsection{Spectrum of homogeneous polynomials} \label{homo}
Assume that $f$ is homogeneous with degree $d$. Then we can consider the divisor $Z\subset \bP^{n-1}=:Y$ defined by $f$. Let $\rho : \tilde{Y}\rightarrow Y$ be an embedded resolution of $Z$ inducing an isomorphism over $Y\backslash Z$. We have a divisor $\tilde{Z}:=\rho ^*Z$ with normal crossing on $\tilde{Y}$. Set $\tilde{Z}=\sum_{V\in J}m_V E_V$ where $E_V$ are the irreducible components with multiplicity $m_V$. Let $\tilde{H}$ be the total transform of a general hyperplane $H$ of $Y$. Then the eigenvalues of the monodromy are $d$-th roots of unity (see \cite{Bu-HH}-4) and we have the following formula for spectrum multiplicity (see \cite{Bu-S}-1.5). Note that the formula holds only on $\alpha \in (0,n)$ by \cite{BS-MVS}.

\begin{prop}\label{neuler} For $\alpha =n-p-\frac{i}{d}\in(0,n)$ with $p\in\Z$ and $i\in[0,d-1]\cap \Z$
	\begin{equation}\label{euler}
	n_{f,\alpha}=(-1)^{p-n+1} \X\left(\tilde{Y},\Omega_{\tilde{Y}} ^p (\log \tilde{Z})\bigotimes_{\Sf_{\tilde{Y}}} \Sf_{\tilde{Y}} \left( -i\tilde{H}+\sum_{V\in J} \lfloor im_V /d\rfloor E_V \right) \right),
	\end{equation}
where $\lfloor \cdot \rfloor$ is floor. 
\end{prop}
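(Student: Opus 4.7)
The strategy is to realize $\M$ as a $d$-fold cyclic cover of the complement $U := Y\setminus Z$, identify the Hodge-filtered monodromy eigenspaces with the hypercohomology of a twisted logarithmic de Rham complex on $\tilde{Y}$, and then collapse the alternating sum defining $n_{f,\alpha}$ into an Euler characteristic.

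Since $f$ is homogeneous of degree $d$, the affine Milnor fiber $\M=f^{-1}(1)$ projects to $U$ as an unramified $d$-sheeted cyclic cover whose deck transformation $z\mapsto \zeta z$ (with $\zeta=e^{2\pi i/d}$) realizes the geometric monodromy. Decomposing the push-forward into characters gives
$$\tH^j(\M,\C)_\lambda \cong H^j(U,L_i),$$
where $L_i$ is the rank-one local system on $U$ with monodromy $\zeta^i$ around each irreducible component of $Z$, and $\lambda=e^{-2\pi i\alpha}=\zeta^i$ for $\alpha\equiv -i/d\pmod{\Z}$. The fact that all monodromy eigenvalues are $d$-th roots of unity is exactly \cite{Bu-HH}-4 recalled above.

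Next I would apply the standard description of the Hodge filtration on $H^*(U,L_i)$ via the Deligne canonical extension $\mathcal{L}_i$: since $L_i$ is torsion, $\mathcal{L}_i$ is a line bundle on $\tilde{Y}$ whose residue along $E_V$ lies in $[0,1)$, the Hodge-to-de Rham spectral sequence for $\left(\Omega^{\bullet}_{\tilde{Y}}(\log\tilde{Z})\otimes\mathcal{L}_i,F\right)$ degenerates at $E_1$, and hence
$$\dim \Gr^p_F H^j(U,L_i)=\dim H^{j-p}\!\left(\tilde{Y},\,\Omega^p_{\tilde{Y}}(\log\tilde{Z})\otimes\mathcal{L}_i\right).$$
The explicit identification $\mathcal{L}_i=\Sf_{\tilde{Y}}\bigl(-i\tilde{H}+\sum_V\lfloor im_V/d\rfloor E_V\bigr)$ comes from two ingredients: (i) the cyclic cover is built by extracting a $d$-th root of a section of $\Sf_Y(dH)$, which globally contributes the $-i\tilde{H}$ term; and (ii) along each $E_V$, where $f$ vanishes to order $m_V$, writing $im_V/d=\lfloor im_V/d\rfloor+\{im_V/d\}$ and absorbing the integer part into the line bundle produces the stated floor correction, leaving the fractional residue $\{im_V/d\}$ as demanded by the Deligne normalization.

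Finally, substituting into
$$n_{f,\alpha}=\sum_{j}(-1)^{j-n+1}\dim\Gr^p_F \tH^j(\M,\C)_\lambda$$
with $p=\lfloor n-\alpha\rfloor$, reindexing $k=j-p$, and pulling out the sign $(-1)^{p-n+1}$ turns the alternating sum into $(-1)^{p-n+1}\X\bigl(\tilde{Y},\,\Omega^p_{\tilde{Y}}(\log\tilde{Z})\otimes\mathcal{L}_i\bigr)$, which is exactly \eqref{euler}. The principal obstacle is the bookkeeping in step (ii): one must verify that the combined effect of the $d$-th root construction and the Deligne extension on the resolution produces precisely $-i\tilde{H}+\sum_V\lfloor im_V/d\rfloor E_V$, not some other integer correction. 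Once this local-to-global identification is pinned down at each $E_V$ and along $\tilde{H}$, the spectral sequence degeneration and the passage from the alternating dimension sum to an Euler characteristic are formal.
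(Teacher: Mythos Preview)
The paper does not actually prove this proposition: it is quoted verbatim from \cite{Bu-S}-1.5 (with the restriction $\alpha\in(0,n)$ coming from \cite{BS-MVS}), and all subsequent sections simply use it as input for the Hirzebruch--Riemann--Roch computation. So there is no ``paper's own proof'' to compare against.

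Your outline is the standard argument behind the cited result and is essentially correct. The three ingredients you isolate --- the identification of $\M$ with the $\mu_d$-cover of $U=Y\setminus Z$, the description of $\Gr_F^pH^j(U,L_i)$ via the $E_1$-degeneration of the logarithmic de Rham complex twisted by the Deligne extension, and the computation $\mathcal{L}_i\cong\Sf_{\tilde{Y}}(-i\tilde{H}+\sum_V\lfloor im_V/d\rfloor E_V)$ --- are exactly what underlies \cite{Bu-S}-1.5 (and the more detailed \cite{Bu-HH}). Two small points worth tightening if you write this out in full: first, for $i=0$ the local system is trivial and one must note that the reduced/unreduced discrepancy in $H^0$ lives in $\Gr_F^0$, which corresponds to $\alpha=n\notin(0,n)$ and so does not affect the stated range; second, the sign and character conventions linking the monodromy eigenvalue $\lambda=e^{-2\pi i\alpha}$ to the specific eigensheaf $\mathcal{L}_i$ (rather than $\mathcal{L}_{d-i}$) deserve one explicit line, since this is where most expositions diverge.
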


Using Hirzebruch-Riemann-Roch, we can calculate $n_{f,\alpha}$.\\

\begin{cor}\label{HHR}Let $\E_{i,p}:=\Omega_{\tilde{Y}} ^p (\log \tilde{Z})\bigotimes_{\Sf_{\tilde{Y}}} \Sf_{\tilde{Y}} \left( -i\tilde{H}+\sum_{V\in J} \lfloor i m_V /d\rfloor E_V \right)$. Then
\begin{equation}
n_{f,\alpha}=(-1)^{p-n+1}(ch(\E_{i,p})\cdot td(\tilde{Y}))_{n-1},
\end{equation}
where $ch(\E_{i,p})$ is the Chern character of $\E_{i,p}$ and $td(\tilde{Y})$ is the Todd class of the tangent bundle $T\tilde{Y}$.
\end{cor}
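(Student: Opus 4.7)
The plan is to apply the Hirzebruch--Riemann--Roch theorem directly to the Euler characteristic appearing in Proposition~\ref{neuler}. That proposition already identifies
$n_{f,\alpha}=(-1)^{p-n+1}\chi(\tilde{Y},\E_{i,p})$,
so the entire task reduces to rewriting the holomorphic Euler characteristic $\chi(\tilde{Y},\E_{i,p})$ as an intersection number on $\tilde{Y}$.

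To invoke HRR I must verify that $\E_{i,p}$ is a locally free coherent sheaf on a smooth projective variety. Smoothness and projectivity of $\tilde{Y}$ are immediate from the choice of $\rho:\tilde{Y}\to Y=\bP^{n-1}$ as an embedded resolution. Since $\tilde{Z}$ is a simple normal crossings divisor, the sheaf $\Omega^p_{\tilde{Y}}(\log\tilde{Z})$ is locally free of finite rank, which is a standard fact about the logarithmic de~Rham complex on an snc pair. Tensoring with the line bundle $\Sf_{\tilde{Y}}\bigl(-i\tilde{H}+\sum_{V\in J}\lfloor im_V/d\rfloor E_V\bigr)$ preserves local freeness, so $\E_{i,p}$ satisfies the hypothesis of HRR.

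Hirzebruch--Riemann--Roch then gives $\chi(\tilde{Y},\E_{i,p})=\int_{\tilde{Y}} ch(\E_{i,p})\cdot td(T\tilde{Y})$. Because $\dim_{\bC}\tilde{Y}=n-1$, the pushforward $\int_{\tilde{Y}}$ extracts the degree-$(n-1)$ component of the product $ch(\E_{i,p})\cdot td(T\tilde{Y})$ in the Chow (or cohomology) ring, which is exactly the meaning of the subscript $(\cdot)_{n-1}$ in the stated formula. Substituting this identity into the formula of Proposition~\ref{neuler} yields the corollary. I do not anticipate any substantive obstacle: the result is a one-line application of HRR to a previously established formula, and the only bookkeeping is to match the sign $(-1)^{p-n+1}$ and the degree-extraction convention. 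The real work -- carrying out the resulting Chern character and Todd class computations on a specific resolution $\tilde{Y}$ in the $n=4$ case -- is the task of the subsequent sections, but it lies beyond the scope of this corollary.
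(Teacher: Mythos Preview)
Your proposal is correct and matches the paper's approach: the corollary is stated as an immediate consequence of applying Hirzebruch--Riemann--Roch to the Euler characteristic in Proposition~\ref{neuler}, and the paper gives no further argument beyond the sentence ``Using Hirzebruch-Riemann-Roch, we can calculate $n_{f,\alpha}$.'' Your verification that $\tilde{Y}$ is smooth projective and $\E_{i,p}$ is locally free is more explicit than the paper's treatment but entirely in the same spirit.
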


Now we use the intersection theory to calculate $ch(\E_{i,p})$ and $td(TX)$. We will denote $c(X):=c(TX)$, $ch(X):=ch(TX)$, and $td(X):=td(TX)$ for the tangent bundle $TX$ of a variety X. 

\subsection{Calculation from intersection theory}\label{ch}
Let $A:=\Omega_{\tilde{Y}}^1 (\log \tilde{Z})$ and $U_i:=\Sf_{\tilde{Y}} \left( -i\tilde{H}+\sum_{V\in J} \lfloor im_V /d\rfloor E_V \right)$. Then 
$$ch(\E_{i,p})=ch(\wedge ^p A)\cdot ch(U_i).$$ 
Thus, we have 
\begin{equation}\label{chchtd}
n_{f,\alpha}=(-1)^{p-n+1}(ch(\wedge ^p A)\cdot ch(U_i)\cdot td(\tilde{Y}))_{n-1}.
\end{equation}
Chern classes calculate the Chern character and Todd classes. For a given vector bundle $E$ of rank $r$, the following is well known (see \cite{Fu}-3.2),
\begin{equation}\label{generalch}
ch(E)=r+c_1(E)+\frac{1}{2}\left(c_1(E)^2-2c_2(E)\right)+\frac{1}{6}\left(c_1(E)^3 -3c_1(E)c_2(E)+3c_3(E)\right)+\cdots,\\
\end{equation}
and
\begin{equation}\label{td}
td(E)=1+\frac{1}{2}c_1(E)+\frac{1}{12}\left(c_1(E)^2+c_2(E)\right)+\frac{1}{24}\left(c_1(E)c_2(E)\right)+\cdots.
\end{equation}
 
Hence, we need to calculate $c(\wedge ^p A)$ to get $ch(\wedge ^p A)$. It can be calculated from the following formula. Let $A$ have rank $r$ and write the Chern polynomial $c_t(A)=\prod_{i=1}^r(1+x_i t)$ where $x_i$ are formal symbols. Then we have 
$$c_t(\wedge^p A)=\prod_{1\leq {i_1}<\cdots<{i_p}\leq r}\left(1+(x_{i_1}+\cdots+x_{i_p})t\right).$$

We should calculate $c(A)=c(\Omega_{\tilde{Y}}^1(\log\tilde{Z}))$. We have the following short exact sequence
$$0\rightarrow \Omega_{\tilde{Y}}^1\rightarrow\Omega_{\tilde{Y}}^1(\log\tilde{Z})\rightarrow \bigoplus_{j\in J}\Sf_{\tilde{Y}}(E_V)\rightarrow 0.$$
This induces
\begin{equation}\label{log}
c(\Omega_{\tilde{Y}}^1(\log\tilde{Z}))=c(\Omega_{\tilde{Y}}^1)\prod_{j\in J}c(\Sf_{\tilde{Y}}(E_V)).
\end{equation}

Summarizing this section we should construct $\tilde{Y}$ explicitly to calculate $n_{f,\alpha}$ in the homogeneous cases.

\section{Spectrum of Hyperplane arrangements}\label{hyper}

Let $D$ be a hyperplane arrangement defined by $f :\bC^n\rightarrow \bC$ with $D_{l}(l\in\Lambda)$ the irreducible components of $D$. We say that $D$ is {\it central} if all the $D_{l}$ pass through the origin. Assume that $D$ is central. Hence, $f$ is homogeneous so that we can apply Corollary \ref{HHR}. We define the {\it intersection lattice} $\cS(D)$ as
$$\cS(D)=\{\cap_{l\in I}D_{l}\}_{I\subset\Lambda,I\neq\emptyset}.$$
\noindent
Each element in this set is called an {\it edge}. For $V\in\cS(D)$, define $\gamma(V):=\codim_{\C^n} V$. An edge is {\it dense} if the subarrangement of hyperplanes containing it is {\it indecomposable} (see the definition of decomposable in the introduction). Let $\cS(D)^{dense}$ be the set of dense edges. Set 
$$\cS(D)^{dense\geq 2}=\{V\in\cS(D)^{dense}|\gamma(V)\geq 2\}.$$  Let $D^{nnc}\subset D$ denote the complement of the subset consisting of normal crossing singularities. Set
$$\cS(D)^{nnc}=\{V\in\cS(D)|V\subset D^{nnc}\}.$$
For a set $\cS$ of edges, let
$$\cS^{(k)}=\{V\in\cS|\gamma(V)=k\}.$$
Notice that $\cS(D)^{nnc}$ includes $\cS(D)^{dense\geq 2}$ and they coincide at codimension $2$.

\subsection{Construction of $\tilde{Y}$}
We construct $\tilde{Y}$ using successive blow-ups (see \cite{Bu-S}-2 and \cite{BMT}-2).
Let $Y_0=Y=\bP^{n-1}$. For a vector space $V\subset X=\bC^n$, its corresponding subspace of $Y$ will be denoted by $\bP (V)$. Let $\cS$ be any of $\cS(D)$, $\cS(D)^{dense}$, and $\cS(D)^{nnc}$. There is a sequence of blow-ups $\rho_i: Y_{i+1}\rightarrow Y_i$ for $0\leq i<n-2$ whose center is the disjoint union of the proper transforms of $\bP(V)$ for $V\in \cS$ with $\dim \bP(V)=i$. Set $\tilde{Y}=Y_{n-2}$ with $\rho:\tilde{Y}\rightarrow Y$ the composition of the $\rho_i$. This is the canonical log resolution of $(\bP^{n-1},\ Z)$ from \cite{DC}-4 where $Z$ is the divisor defined by $f$. 

Note that $\cS=\cS(D)^{dense}$ gives the minimal log resolution (see \cite{BMT}-2) but is not \emph{stable under intersection} (i.e. $V\cap V'\in \cS$ if $V,V'\in\cS$).

\subsection{Cohomology of $\tilde{Y}$}\label{coh}
Let $\cS:=\cS(D)^{nnc}$. By \cite{Bu-S}-5.3 (see also \cite{DC}-5) the cohomology ring of $\tilde{Y}$ is described as
\begin{equation}
\bQ[e_V]_{V\in\cS}/I_\cS\tilde{\rightarrow}H^{\bullet}(\tilde{Y},\bQ)
\end{equation}
sending $e_V$ to $[E_V]$ for $V\neq 0$ and $e_0$ to $-[E_0]$, where $e_V$ are independent variables for $V\in \cS$ and $E_0$ is the total transform of a general hyperplane which was denoted by $\tilde{H}$. Moreover, the ideal $I_\cS$ is generated by 
\begin{equation}\label{ideal}
R_{V,W}=\begin{cases}e_V e_W&\text{if $V$,$W$ are incomparable,}\\
e_V \tilde{e}_W^{\gamma(W)-\gamma(V)}&\text{if $W\subsetneq$V,}\\
\end{cases}
\end{equation}
where $\tilde{e}_W:=\sum_{W'\subset W}e_{W'}$. Here $V,W,W'\in\cS \cup \{\bC^n\}$ and $e_{\bC^n}=1$.\\

The stability of $\cS(D)^{nnc}$ under intersection was used in \cite{Bu-S} to get (\ref{ideal}) from \cite{DC} by observing that the \emph{nested} condition from \cite{DC} always becomes linearly ordered by the inclusion relation.
From now we use $e_V$ and $e_0$ for $[E_V]$ and $-[E_0]$ in $H^{\bullet}(\tilde{Y},\bQ)$.

\subsection{Calculation of $c(\tilde{Y})$}\label{chern}
$\tilde{Y}$ was constructed by the successive blow-ups. By \cite{Bu} (see also \cite{Fu}-Example 15.4.2), we have a formula for the Chern class of $\tilde{Y}$ , $c(\tilde{Y})=\prod_{V\in\cS}F_V$, where
\begin{equation}
F_V=\begin{cases}(1+e_V-\tilde{e}_V)^{-\gamma(V)}(1+e_V)(1-\tilde{e}_V)^{\gamma(V)}&\text{if $V\neq0$},\\
(1-e_0)^n&\text{if $V=0$.}
\end{cases}
\end{equation}

\subsection{Duality on $\tilde{Y}$}\label{pf1}
	Let $U$ be a divisor on $\tilde{Y}$. $u:=[U]$ can be written as $u=u_0 e_0+\sum_{V\in \cS} u_V e_V\in H^2(\tilde{Y})$ where $u_0, u_V\in \bZ$. Set $\cF_p(U):=\Omega_{\tilde{Y}} ^{p} (\log \tilde{Z})\otimes\Sf_{\tilde{Y}}(U)$ and consider  a function $\mu_p : H^2(\tilde{Y})\rightarrow \bZ$ defined by $\mu_p(u):=(-1)^{p-n+1}\X(\tilde{Y},\cF_p(U))$ for each $p\in\bZ$. By Proposition \ref{neuler}, $n_{f,n-p-\frac{i}{d}}=\mu_p\left(ie_0+\sum_{V\in \cS} \lfloor im_V /d\rfloor e_V \right)$. 
Using Serre duality, we get the following property.
\begin{prop}\label{symmetry}
Assume  $f$ is a reduced hyperplane arrangement of degree $d$. 
Then we have the following for $i\in \{0,\cdots,d-1\}$ and $p\in \{0,\cdots,n-1\}$
$$n_{f,p+1-\frac{i}{d}}=\mu_{p}\left((d-i) e_0+\sum_{V\in \cS} (m_V-1-\lfloor i m_V/d\rfloor) e_V\right).$$

\end{prop}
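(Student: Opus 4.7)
The plan is to reduce the statement to a logarithmic Serre-duality identity on $\tilde Y$, after a symmetric reindexing. Applying Proposition \ref{neuler} with $p$ replaced by $n-1-p$ and the same $i$, the equality $n-(n-1-p)-i/d = p+1-i/d$ gives
$$n_{f,p+1-\frac{i}{d}} = \mu_{n-1-p}\Bigl(ie_0 + \sum_{V\in\cS}\lfloor im_V/d\rfloor\, e_V\Bigr),$$
so the task reduces to matching this with the claimed formula for $\mu_p$.

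The core input is the general identity
$$\mu_p(u) = \mu_{n-1-p}\bigl(-u - [\tilde Z_{\text{red}}]\bigr) \text{ for every } u\in H^2(\tilde Y,\bZ),$$
where $\tilde Z_{\text{red}} := \sum_V E_V$ is the reduced total transform. Two standard ingredients produce it: (i) the log residue pairing
$$\Omega^p_{\tilde Y}(\log\tilde Z)\otimes\Omega^{n-1-p}_{\tilde Y}(\log\tilde Z)\longrightarrow\Omega^{n-1}_{\tilde Y}(\log\tilde Z)\cong\omega_{\tilde Y}\otimes\cO_{\tilde Y}(\tilde Z_{\text{red}}),$$
which yields $\Omega^p(\log\tilde Z)^\vee \cong \Omega^{n-1-p}(\log\tilde Z)\otimes\omega_{\tilde Y}^{-1}\otimes\cO(-\tilde Z_{\text{red}})$, and (ii) Serre duality on the smooth projective $(n-1)$-fold $\tilde Y$. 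Together they give $\chi(\cF_p(U)) = (-1)^{n-1}\chi(\cF_{n-1-p}(-U-\tilde Z_{\text{red}}))$, and multiplying by the definitional sign $(-1)^{p-n+1}$ converts this into the asserted identity for $\mu$.

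It remains to expand $[\tilde Z_{\text{red}}]$ in the cohomology basis from Section \ref{coh} and plug in. Since each hyperplane component $D_l$ is smooth, the standard blow-up formula gives $[\tilde D_l] = [\tilde H] - \sum_{V\in\cS,\, V\subset D_l}e_V = -e_0 - \sum_{V\in\cS,\, V\subset D_l}e_V$; summing over the $d$ hyperplanes and then adding the remaining exceptional contribution $\sum_{V\in\cS}e_V$ yields
$$[\tilde Z_{\text{red}}] = -d\,e_0 + \sum_{V\in\cS}(1-m_V)\,e_V.$$
Substituting $u = (d-i)e_0 + \sum_{V\in\cS}(m_V-1-\lfloor im_V/d\rfloor)e_V$ into $-u-[\tilde Z_{\text{red}}]$ immediately returns $ie_0 + \sum_{V\in\cS}\lfloor im_V/d\rfloor\, e_V$, which by the first step equals $n_{f,p+1-i/d}$.

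The only genuinely delicate point is keeping the two sign contributions — coming from the log residue duality and from Serre duality — in register with the sign $(-1)^{p-n+1}$ that is baked into the definition of $\mu_p$. After this, the argument is bookkeeping in the cohomology ring $H^*(\tilde Y,\bQ)$ described in Section \ref{coh} and does not use any further structure of the hyperplane arrangement beyond the blow-up formula for the proper transforms of the $D_l$.
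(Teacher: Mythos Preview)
Your argument is correct and follows essentially the same route as the paper: both use the log-residue identification $\Omega^p_{\tilde Y}(\log\tilde Z)\cong \Omega^{n-1-p}_{\tilde Y}(\log\tilde Z)^\vee\otimes\omega_{\tilde Y}\otimes\cO_{\tilde Y}(\tilde Z_{\mathrm{red}})$ combined with Serre duality to obtain $\mu_p(u)=\mu_{n-1-p}(-u-[\tilde Z_{\mathrm{red}}])$, then compute $[\tilde Z_{\mathrm{red}}]=-de_0+\sum_{V\in\cS}(1-m_V)e_V$ in the basis of Section~\ref{coh} and substitute. The only difference is the order of presentation (you first invoke Proposition~\ref{neuler} and then apply the duality, whereas the paper does the reverse), which is immaterial.
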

\begin{proof}
By Serre duality we have $H^q(\tilde{Y},\cF_p(U))\cong H^{n-1-q}(\tilde{Y},\cF_{n-1-p}(-\tilde{Z}_{red}-U))^{\vee}$ using $\Omega_{\tilde{Y}} ^{p} (\log \tilde{Z})=\Omega_{\tilde{Y}} ^{n-1-p} (\log \tilde{Z})^{\vee}\otimes\omega_{\tilde{Y}}\otimes\Sf_{\tilde{Y}}(\tilde{Z}_{red})$(see \cite{EV}-6.8 (b)). Let $z\in H^2(\tilde{Y})$ be the element corresponding to $\tilde{Z}_{red}$.
From this duality we have the following 
\begin{equation}\label{dual}
\mu_p (u)=\mu_{n-1-p}(-z-u).
\end{equation}

Let $d_l(l\in \Lambda)\in H^2(\tilde{Y})$ be  the element corresponding to the strict transform of an irreducible component $D_{l}$ in the hyperplane arrangement $D=\cup_{l\in \Lambda}D_{l}$. We have relations $d_l=-\left(e_0+\sum_{V\subset D_{l}}e_V \right)$ and $z=\sum_{V\in\cS}e_V+\sum_{l\in\Lambda}d_l$. Thus,
$$z=\sum_{V\in \cS}(1-m_V) e_V-d e_0.$$
Plugging into (\ref{dual}) we have
$$\mu_p (u_0 e_0+\sum_{V\in \cS} u_V e_V)=\mu_{n-1-p}((d-u_0) e_0+\sum_{V\in \cS} (m_V-1-u_V) e_V).$$
This equality means 
$$n_{f,p+1-\frac{i}{d}}=n_{f,n-(n-1-p)-\frac{i}{d}}=\mu_{p}\left((d-i) e_0+\sum_{V\in \cS} (m_V-1-\lfloor i m_V/d\rfloor) e_V\right).$$
\end{proof}

\section{Proof of Theorem \ref{n4}}
Recall that $e_V$ and $e_0$ are the generators of $H^{\bullet}(\tilde{Y},\bQ)$.
Let $a_V$, $b_W$ and $c$ denote the $e_V$ for $V\in \cS^{(2)}$, $e_W$ for $W\in \cS^{(3)}$ and $e_0$ respectively (see \ref{coh}).\\

\subsection{Calculation of $H^{\bullet}(\tilde{Y})$}
According to Section \ref{coh}, we have generators $a_V$, $b_W$ and $c$ and the following relations in $H^{\bullet}(\tilde{Y},\bQ)$ by (\ref{ideal}):
\begin{equation}\label{rel}
a_V a_{V'}=b_W b_{W'}=b_W c=a_V b_W=0\ (V\neq V', W\neq W',W\not\subset V),
\end{equation}
$$a_V b_W^2=a_V c^2=b_W^2 c=b_W c^2 =a_V b_W c=0,$$
$$a_V^3=2\left(1- \sum_{W\subset V} 1\right)c^3,\ a_V^2 c=b_W^3=-c^3, c^4=0,$$
$$\ a_V b_W=-a_V c\ (W\subset V), \text{ and } a_V^2 b_W =c^3\ (W\subset V).$$

From relations (\ref{rel}) we get:
\begin{align}\label{sums}
\left(\sum_{V\in \cS^{(2)}}N_V a_V^s\right) \left(\sum_{V\in \cS^{(2)}}N'_V a_V^{s'}\right) &= \sum_{V\in \cS^{(2)}} N_V N'_V a_V^{s+s'},\\
\left(\sum_{W\in \cS^{(3)}}N_W b_W^s\right) \left(\sum_{W\in \cS^{(3)}}N'_W b_W^{s'}\right) &= \sum_{W\in \cS^{(3)}} N_W N'_W b_W^{s+s'},\notag
\end{align}
and
\begin{align*}
\left(\sum_{V\in \cS^{(2)}}N_V a_V^s\right) \left(\sum_{W\in \cS^{(3)}}N'_V b_W^{s'}\right) &= \sum_{V\in \cS^{(2)}}\sum_{W\subset V} N_V N'_W a_V^s b_W^{s'}\notag
\end{align*}
for any coefficients $N_V$, $N'_V$, $N_W$, $N'_W$ and any positive integers $s$ and $s'$.
These equalities are very useful for the calculations.

\subsection{Calculation of Chern classes}\label{chern4}
According to Section \ref{chern}, we have
\begin{align*}
c(\tilde{Y})&=(1-c)^4\prod_{W\in \cS^{(3)}}\left( (1+b_W)\left(\frac{1 -c- b_W}{1 -c}\right)^3\right)\\
	&\cdot\prod_{V\in \cS^{(2)}}\left( (1+a_V)\left(\frac{1 -c-\sum_{W\subset V}b_W -a_V}{1 -c-\sum_{W\subset V}b_W}\right)^2\right).
\end{align*}

Since $c(\tilde{Y})=c((\Omega_{\tilde{Y}}^1)^{\vee})$, it is enough to change the signs of all the generators in the formula above for the calculation of $c(\Omega_{\tilde{Y}}^1)$
\begin{align*}
c(\Omega_{\tilde{Y}}^1)&=(1+c)^4\prod_{W\in \cS^{(3)}}\left( (1-b_W)\left(\frac{1 +c+ b_W}{1 +c}\right)^3\right)\\
	&\cdot\prod_{V\in \cS^{(2)}}\left( (1-a_V)\left(\frac{1 +c+\sum_{W\subset V}b_W +a_V}{1 +c+\sum_{W\subset V}b_W}\right)^2\right).
\end{align*}

From equation (\ref{log}), we have 
$$c(\Omega_{\tilde{Y}}^1(\log \tilde{Z}))=c(\Omega_{\tilde{Y}}^1)\prod_{W\in \cS^{(3)}}\frac{1}{1-b_W}\prod_{V\in \cS^{(2)}}\frac{1}{1-a_V}\prod_{l\in \Lambda}\frac{1}{1-d_l},$$\\
where $d_l(l\in \Lambda)$ correspond to the strict transform of an irreducible component $D_{l}$ of hyperplane arrangement $D=\cup_{l\in \Lambda}D_{l}$. Recall $d_l=-\left(c+\sum_{W\subset D_{l}}b_W +\sum_{V\subset D_{l}}a_V \right)$.

We calculate each factor of the Chern classes above, namely $c(\tilde{Y})$, $c(\Omega_{\tilde{Y}}^1)$ and $c(\Omega_{\tilde{Y}}^1(\log \tilde{Z}))$ using the relations (\ref{rel}):
$$\prod_{V\in \cS^{(2)}} (1-a_V) = 1+\sum_{V\in \cS^{(2)}}(-a_V),$$
$$\prod_{W\in \cS^{(3)}}(1-b_W)=1+\sum_{W\in \cS^{(3)}}(-b_W),$$
$$(1+c)^4=1+4c+6c^2+4c^3,$$
$$\prod_{W\in \cS^{(3)}}\left(\frac{1 +c+ b_W}{1 +c}\right)^3=1+3\sum_{W\in \cS^{(3)}} b_W+3\sum_{W\in \cS^{(3)}} b_W^2+\sum_{W\in \cS^{(3)}} b_W^3,$$
\begin{align*}
\prod_{V\in \cS^{(2)}}\left(\frac{1 +c+\sum_{W\subset V}b_W +a_V}{1 +c+\sum_{W\subset V}b_W}\right)^2&=1+2\sum_{V\in \cS^{(2)}} a_V\\
	&+\sum_{V\in \cS^{(2)}} \left(a_V^2-2(1-\sum_{W\subset V} 1)a_V c\right)+\sum_{V\in \cS^{(2)}} a_V^3,
\end{align*}
and 
\begin{align*}
\prod_{l\in \Lambda}\frac{1}{1-d_l}&=\prod_{l\in \Lambda}\frac{1}{1+c+\sum_{W\subset D_{l}}b_W+\sum_{V\subset D_{l}}a_V}\\
	&=1-\left(\sum_{V\in \cS^{(2)}} m_V a_V+\sum_{W\in \cS^{(3)}} m_W b_W +dc\right)\\
	&+\left(\sum_{V\in \cS^{(2)}}\left(\binom{m_V+1}{2}a_V^2+\left(d+1-\sum_{W\subset V} (m_W+1)\right)m_Va_V c\right)\right.\\
	&+\left.\sum_{W\in \cS^{(3)}}\binom{m_W+1}{2}b_W^2+\binom{d+1}{2}c^2\right)\\
	&-\left(\sum_{V\in \cS^{(2)}} \left(2\binom{m_V+1}{3}-d\binom{m_V+1}{2}\right)-\sum_{W\in \cS^{(3)}}\binom{m_W+2}{3}+\binom{d+2}{3}\right.\\
	&-\left.\sum_{V\in \cS^{(2)}} \sum_{W\subset V}\left(2\binom{m_V+1}{3}-m_W\binom{m_V+1}{2}\right)\right)c^3,
\end{align*}
where  $m_V=\sum_{V\subset D_{l}} 1$ and $m_W=\sum_{W\subset D_{l}} 1$.
We need to do some combinatorics for $\prod_{l\in \Lambda}\frac{1}{1-d_l}$.\\

From these factors we give formulas for the Chern classes:
\begin{align*}
c(\Omega_{\tilde{Y}}^1)&=1+\left( \sum_{V\in \cS^{(2)}} a_V +\sum_{W\in \cS^{(3)}} 2b_W +4c \right)+  \left(\sum_{V\in \cS^{(2)}} \left(-a_V^2 +2a_V c\right)+6c^2\right)\\
	&+\left(\sum_{V\in \cS^{(2)}} 2 +\sum_{W\in \cS^{(3)}} 2 +4\right)c^3,
\end{align*}
\begin{align*}
c(\tilde{Y})&=1-\left( \sum_{V\in \cS^{(2)}} a_V +\sum_{W\in \cS^{(3)}} 2b_W +4c\right)+\left ( \sum_{V\in \cS^{(2)}} (-a_V^2 +2a_V c)+6c^2\right)\\
	&-\left(\sum_{V\in \cS^{(2)}} 2 +\sum_{W\in \cS^{(3)}} 2 +4\right)c^3,
\end{align*}
and
\begin{align*}	
c(\Omega_{\tilde{Y}}^1(\log \tilde{Z}))&=1-\left( \sum_{V\in \cS^{(2)}} (m_V-2)a_V +\sum_{W\in \cS^{(3)}} (m_W-3)b_W +(d-4)c\right)-\\
	&+\left ( \sum_{V\in \cS^{(2)}} \left(\binom{m_V-1}{2}a_V^2 +(m_V-2)\left(d-3-\sum_{W\subset V} (m_W-2)\right)a_V c\right)\right.\\
	&+\left.\sum_{W\in \cS^{(3)}}\binom{m_W-2}{2}b_W^2+\binom{d-3}{2}c^2\right)\\
	&-\left(\sum_{V\in \cS^{(2)}} \left(2\binom{m_V-1}{3}-(d-4)\binom{m_V-1}{2}\right)-\sum_{W\in \cS^{(3)}}\binom{m_W-1}{3}\right.\\
	&+\left.\binom{d-2}{3}-\sum_{V\in \cS^{(2)}}\sum_{W\subset V}\left(2\binom{m_V-1}{3}-(m_W-3)\binom{m_V-1}{2}\right)\right)c^3.
\end{align*}

\subsection{Calculation of the Chern character of line bundle}\label{chlb}
Let $U$ be a divisor on $\tilde{Y}$. Then its class $u:=[U]$ can be written as $u=\sum_{V\in \cS^{(2)}} u_V a_V +\sum_{W\in \cS^{(3)}} u_W b_W+u_0 c$.  Since $c(\Sf_{\tilde{Y}}(U))=1+u$ we have 
\begin{align*}
ch(\Sf_{\tilde{Y}}(U))&=1+\left(\sum_{V\in \cS^{(2)}} u_V a_V +\sum_{W\in \cS^{(3)}} u_W b_W+u_0c\right)\\
&+\frac{1}{2}\left(\sum_{V\in \cS^{(2)}} \left( u_V^2 a_V^2+2u_V\left(u_0-\sum_{W\subset V} u_W \right)a_V c \right)+\sum_{W\in \cS^{(3)}}u_W^2 b_W^2 + u_0^2 c^2\right)\\
&+\frac{1}{6}\left(\sum_{V\in \cS^{(2)}}u_V^2\left((2u_V-3u_0)+\sum_{W\subset V} (3u_W-2 u_V)\right)-\sum_{W\in \cS^{(3)}} u_W^3+u_0^3   \right) c^3
\end{align*}
by formula (\ref{generalch}). Here we used (\ref{sums}) to simplify the calculation.

\subsection{Calculation of $n_{f,\alpha}$}
Let $\mu_p(u)=(-1)^{p-n+1}(ch(\Omega_{\tilde{Y}}^p(\log \tilde{Z}))\cdot ch(U)\cdot td(\tilde{Y}))_{n-1}$ as before (see Section \ref{pf1}).
We have calculated $ch(\Sf_{\tilde{Y}}(U))$ in Section \ref{chlb}. We can also calculate $td(\tilde{Y})$, $ch(\Omega_{\tilde{Y}}^p(\log \tilde{Z}))$ for $p=0,1,2,3$ from Section \ref{ch} and \ref{chern4}.  Multiplying all these we get the following, using (\ref{sums}) to simplify the calculation:

 	\begin{align*}
		\mu_0(u)&=\binom{u_0-1}{3}-\sum_{W\in \cS^{(3)}} \binom{u_W}{3}-\sum_{V\in \cS^{(2)}}\left( (u_0-3)\binom{u_V}{2}-2\binom{u_V}{3}\right)\\
			&-\sum_{V\in \cS^{(2)}} \sum_{W\subset V}\left( 2 \binom{u_V}{3} -(u_W-2)\binom{u_V}{2}\right),\\
		\mu_1(u)&=(d-u_0-1)\binom{u_0-1}{2}-\sum_{W\in \cS^{(3)}}(m_W-u_W-1)\binom{u_W}{2}\\
			&-\sum_{V\in\cS^{(2)}}\left(u_{V}(m_V-u_{V}-1)(u_0-2) +(d-u_0-1-2(m_V-u_{V}-1))\binom{u_{V}}{2}\right)\\
			&+\sum_{V\in\cS^{(2)}}\sum_{W\subset V}\left( u_{V}(m_V-u_{V}-1)(u_{W}-u_{V})+(m_W-u_{W}-1)\binom{u_{V}}{2}\right),\\
		\mu_2(u)&=(u_0-1)\binom{d-u_0-1}{2}-\sum_{W\in \cS^{(3)}} u_{W}\binom{m_W-u_W-1}{2}\\
			&-\sum_{V\in\cS^{(2)}}\left((m_V-u_{V}-1)u_{V}(d-u_0-2) +(u_0-1-2u_{V})\binom{m_V-u_{V}-1}{2}\right)\\
			&+\sum_{V\in\cS^{(2)}}\sum_{W\subset V}\left( \dfrac{}{} (m_V-u_{V}-1)u_{V}((m_W-u_{W}-1)-(m_V-u_{V}-1))\right.\\
			&\left.+u_{W}\binom{m_V-u_{V}-1}{2}\right),\text{ and}\\
		\mu_3(u)&=\binom{d-u_0-1}{3}-\sum_{W\in \cS^{(3)}} \binom{m_W - u_W-1}{3}\\
			&-\sum_{V\in \cS^{(2)}}\left( (d-u_0-3)\binom{m_V-u_V-1}{2}-2\binom{m_V - u_V-1}{3}\right)\\
			&- \sum_{V\in \cS^{(2)}} \sum_{W\subset V}\left( 2 \binom{m_V-u_V-1}{3} -(m_W-u_W-3)\binom{m_V-u_V-1}{2}\right).
	\end{align*}

By formula (\ref{chchtd}), $n_{f,\alpha}=\mu_p\left(\left( ic +\sum_{W\in \cS^{(3)}} \lfloor i m_W /d\rfloor b_W+\sum_{V\in \cS^{(2)}} \lfloor i m_V /d\rfloor a_V \right)\right)$ with $\alpha =4-p-\frac{i}{d}\in (0,4)$.  From the relation $m_V-\lfloor im_V/d\rfloor=\lceil (d-i)m_V/d\rceil$ and substituting $d-i$ for $i$ for $\alpha\in(0,2]$, we get the formula in Theorem \ref{n4} for $\cS=\cS(D)^{nnc}$. 

\begin{rem}
Here we calculated all the $\mu_p$ without using Proposition \ref{symmetry}. We can and did use Proposition \ref{symmetry} to double-check the formulas for $\mu_p$. In other words, we can get the formulas for $n_{f,4-\frac{i}{d}}$ and $n_{f,3-\frac{i}{d}}$ from the formulas for  $n_{f,1-\frac{i}{d}}$ and $n_{f,2-\frac{i}{d}}$ respectively and vice versa. Also, all computations were double-checked by computer. The implementation of the symbolic computation is possible because of the relations (\ref{sums}).
\end{rem}

\subsection{Non-dense edges}\label{cS}
In this section, we will prove that $\cS$ in Theorem \ref{n4} can be replaced by any set of edges containing all the dense edges with codimension $\geq 2$ by showing the vanishing of all the terms which depend on edges in $\cS\backslash\cS(D)^{dense\geq 2}$. This implies Theorem \ref{n4}. Fix a set $\cS$ containing $\cS(D)^{dense\geq 2}$.

First of all, the edges in $\cS^{(1)}$ are not used in our formula. 
If an edge $V\in \cS^{(2)}$ is not dense the terms in $\eta_{0,i}(\langle u_{V}\rangle_{V\in \cS})$ and the terms in $\eta_{1,i}(\langle u_{V},v_{V}\rangle_{V\in \cS})$ depending on $V\in\cS^{(2)}$ vanish  since $u_V+v_V=1$ and  $u_V$ is $0$ or $1$ for any formula for $n_{f,\alpha}$ in Theorem \ref{n4}.

The terms $\eta_{0,i,W}$ in $\eta_{0,i}(\langle u_{V}\rangle_{V\in \cS})$ and the terms $\eta_{1,i,W}$ in $\eta_{1,i}(\langle u_{V},v_{V}\rangle_{V\in \cS})$ depending on $W\in\cS^{(3)}$ are
\begin{align*}
\eta_{0,i,W}&=- \binom{u_{W}}{3}-\sum_{\substack{V\supset W\\ V\in \cS^{(2)}}}\left( 2 \binom{u_{V}}{3} -(u_{W}-2)\binom{u_{V}}{2}\right)\text{ and }\\
\eta_{1,i,W}&=-v_{W}\binom{u_{W}}{2}+\sum_{\substack{V\supset W\\ V\in \cS^{(2)}}}\left( u_{V}v_{V}(u_{W}-u_{V})+v_{W}\binom{u_{V}}{2}\right).\\
\end{align*}
If $V\in\cS^{(2)}$ in $\eta_{0,i,W}$ and $\eta_{1,i,W}$ (i.e. $V\supset W$) is not dense, the terms depending on $V$ vanish since $m_V=2$. Thus, we may assume that $V\in\cS^{(2)}$ in $\eta_{0,i,W}$ and $\eta_{1,i,W}$ are dense edges with codimension $2$. 

When an edge $W\in\cS^{(3)}$ is not dense, we have two possibilities: either $W\notin\cS(D)^{nnc}$ or $W\in\cS(D)^{nnc}\backslash\cS(D)^{dense\geq 2}$. If $W\notin\cS(D)^{nnc}$, then $m_W=3$ and we do not have dense edge $V\in\cS^{(2)}$ in $\eta_{0,i,W}$ and $\eta_{1,i,W}$. Also, $m_W=3$ implies that $u_W+v_W=2$ and  $u_W$ is $0$,$1$ or $2$ for any formula for $n_{f,\alpha}$ in Theorem \ref{n4}. Hence, $\eta_{0,i,W}=\eta_{1,i,W}=0$. In the case of $W\in\cS(D)^{nnc}\backslash\cS(D)^{dense\geq 2}$, we have exactly one codimension 2 dense edge $V_W\in\cS^{(2)}$ such that $W\subset V_W$ since the subarrangement of hyperplanes containing $W$ is decomposable. Moreover, $m_W=m_{V_W}+1$. Thus,
\begin{align*}
\eta_{0,i,W}&=-\binom{u_{W}}{3}-\left( 2 \binom{u_{V_W}}{3} -(u_{W}-2)\binom{u_{V_W}}{2}\right)\text{ and }\\
\eta_{1,i,W}&=-v_{W}\binom{u_{W}}{2}+\left( u_{V_W}v_{V_W}(u_{W}-u_{V_W})+v_{W}\binom{u_{V_W}}{2}\right).\\
\end{align*}
For any formula for $n_{f,\alpha}$ in Theorem \ref{n4} we have $u_W+v_W=m_W-1$ and $u_{V_W}+v_{V_W}=m_{V_W}-1$.  From $m_W=m_{V_W}+1$ we have only two possibilities for each formula for $n_{f,\alpha}$: either $u_W=u_{V_W}$ or $u_W=u_{V_W}+1$. For the first case, $v_{W}=m_W-1-u_W=m_{V_W}-u_{V_W}=v_{V_W}+1$.
For the second case, $v_{W}=m_W-1-u_W=m_{V_W}-u_{V_W}-1=v_{V_W}$.
Both cases make $\eta_{0,i,W}=0$ and $\eta_{1,i,W}=0$.

Hence, only the terms depending on dense edges can survive. This proves Theorem \ref{n4}.\\
\rightline{$\Box$}

\section{Proof of Corollaries}
We need the Thom-Sebastiani formula for Corollaries \ref{n3} and \ref{n2}. Hyperplane arrangements have non-isolated singularities but the formula still holds in our case (see \cite{Kul}-II (8.10.6)). Here we state a special case.

\begin{lem}\label{thom}
Assume that $f:(\bC^n,\ 0)\rightarrow (\bC,\ 0)$ can be written as $f(x_1,\cdots,x_n)=g(x_1,\cdots,x_m)$ for $g:(\bC^m,\ 0)\rightarrow (\bC,\ 0)$. Then $Sp(f)=(-t)^{n-m}Sp(g)$.
\end{lem}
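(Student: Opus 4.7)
The plan is to identify the reduced cohomology of $M_{f,0}$ with that of $M_{g,0}$ as mixed Hodge structures carrying the semisimple monodromy $T_s$, and then read off the effect of this identification on each spectrum multiplicity term-by-term.

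Writing a point of $\bC^n$ as $(x,y)$ with $x\in\bC^m$ and $y\in\bC^{n-m}$, the hypothesis $f(x,y)=g(x)$ makes the Milnor fiber
\[
M_{f,0}=\{(x,y)\,:\,|x|^2+|y|^2<\epsilon^2,\ g(x)=t\}
\]
project to $M_{g,0}$ (realized in a slightly smaller ball in $\bC^m$), with the fiber over $x$ being the open Euclidean ball $\{y\,:\,|y|^2<\epsilon^2-|x|^2\}$. This is a trivial contractible-fiber bundle, so the projection $p$ is a homotopy equivalence and $p^\ast\colon\widetilde H^j(M_{g,0},\bC)\to\widetilde H^j(M_{f,0},\bC)$ is an isomorphism of vector spaces for every $j$. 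Because the monodromy of $f$ is trivial in the $y$-direction, $p^\ast$ intertwines the semisimple monodromies $T_s$ on the two sides.

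The central and only genuinely nontrivial step will be to upgrade $p^\ast$ to an isomorphism of mixed Hodge structures \emph{with no Tate twist}, i.e.\ $\Gr_F^p\widetilde H^j(M_{f,0})_\lambda\cong\Gr_F^p\widetilde H^j(M_{g,0})_\lambda$ for the same filtration index $p$. This is essentially the content of \cite{Kul}-II~(8.10.6) applied with a trivial second factor, and equivalently follows from the compatibility of Steenbrink's limit MHS (or of nearby cycles) with pullback along the smooth projection $\bC^n\to\bC^m$. Granted this, the rest is accounting: given $\alpha\in\bQ$, set $\alpha'=\alpha-(n-m)$; then $\exp(-2\pi i\alpha')=\exp(-2\pi i\alpha)=:\lambda$ and $\lfloor m-\alpha'\rfloor=\lfloor n-\alpha\rfloor=:p$, so
\[
n_{f,\alpha}=\sum_j(-1)^{j-n+1}\dim\Gr_F^{p}\widetilde H^j(M_{f,0})_\lambda=(-1)^{n-m}\sum_j(-1)^{j-m+1}\dim\Gr_F^{p}\widetilde H^j(M_{g,0})_\lambda=(-1)^{n-m}n_{g,\alpha'},
\]
and multiplying by $t^\alpha$ and summing yields $Sp(f)=(-1)^{n-m}t^{n-m}Sp(g)=(-t)^{n-m}Sp(g)$. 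The shift in $\alpha$ and the sign $(-1)^{n-m}$ both fall out of the defining formula for $n_{f,\alpha}$ alone, so once the Hodge-theoretic compatibility is in hand no further computation is needed.
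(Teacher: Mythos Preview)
The paper does not supply its own proof of this lemma; it simply records it as a special case of the Thom--Sebastiani formula, citing \cite{Kul}-II~(8.10.6). Your argument is correct and effectively fills in what the paper leaves to that reference: the deformation retraction of $M_{f,0}$ onto $M_{g,0}\times\{0\}$ and the monodromy compatibility are clear, the identification of the limit mixed Hodge structures (which you rightly isolate as the only nontrivial point) follows from the fact that nearby cycles, as mixed Hodge modules, commute with smooth pullback along $\bC^n\to\bC^m$, and the final bookkeeping with $\alpha'=\alpha-(n-m)$ is accurate. One small remark: the image of the projection is actually $\{x:|x|<\epsilon,\ g(x)=t\}$, which is $M_{g,0}$ with the \emph{same} parameters $\epsilon,t$, so there is no need to pass to a slightly smaller ball.
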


\subsection{Proof of Corollary \ref{n3}} \label{3}
Consider $f(x_1,x_2,x_3)$ with degree $d$ in $\bC^3$. Let $f(x_1,x_2,x_3)=g(x_1,x_2,x_3,x_4)$. We will calculate $Sp(g)$. First, we assume that $\cS^{(3)}\neq\emptyset$. The hyperplane arrangement $g$ has only one codimension $3$ dense edge $W=\{x_1=x_2=x_3=0\}$. Moreover the multiplicity $m_W$ is $d$. Thus, $\lceil im_{W}/d \rceil-1=i-1$, $\lfloor (d-i) m_{W}/d \rfloor=d-i$, $\lfloor i m_{W}/d \rfloor=i$, and $\lceil (d-i)m_{W}/d \rceil-1=d-i-1$. Using Theorem \ref{n4} we get a formula for $n_{g,\alpha}$.

When $\cS^{(3)}=\emptyset$, $\cS^{(2)}$ has only one element, $V$, or no elements. In the case that $\cS^{(2)}=\{V\}$, $m_V=d$ or $d-1$. The calculation from Theorem \ref{n4} for both cases coincides with the calculation from the formula above. If $\cS^{(2)}=\emptyset$, the hyperplane arrangement is a generic case with $1\leq d\leq 3$ which also satisfies the formula above.

The set $\cS^{(2)}$ has a one-to-one correspondence to the set $\cS$ of codimension 2 dense edges of $f$. This proves Corollary \ref{n3} by Lemma \ref{thom}.\\
\rightline{$\Box$}

\subsection{Proof of Corollary \ref{n2}}
Consider $f(x_1,x_2)$ with degree $d$ in $\bC^2$. Let $f(x_1,x_2)$ $=h(x_1,x_2,x_3,x_4)$. We will calculate $Sp(h)$. The hyperplane arrangement $h$ has no codimension $3$ dense edge. First, we assume that $\cS^{(2)}\neq\emptyset$. The hyperplane arrangement $h$ has only one codimension $2$ dense edge $V=\{x_1=x_2=0\}$. Moreover the multiplicity $m_V$ is $d$. Thus, $\lceil im_{V}/d \rceil-1=i-1$, $\lfloor (d-i) m_{V}/d \rfloor=d-i$, $\lfloor i m_{V}/d \rfloor=i$, and $\lceil (d-i)m_{V}/d \rceil-1=d-i-1$. Using Theorem \ref{n4} we get a formula for $n_{h,\alpha}$.
If $\cS^{(2)}=\emptyset$, then $d=1$ or $2$. The calculation from Theorem \ref{n4} coincides with the calculation from the formula above. This proves Corollary \ref{n2} by Lemma \ref{thom}.\\
\rightline{$\Box$}

\subsection{Proof of Corollary \ref{3:1}}
Notice that $f_2(x_4)=x_4$ and $\cS^{(2)}$ is the set of codimension 2 dense edges of subarrangement $f_1(x_1,x_2,x_3)$ in $\bC^4$. First, we assume $\cS^{(3)}\neq\emptyset$. We have only one codimension $3$ dense edge $W_\infty=\{x_1=x_2=x_3=0\}$ with the multiplicity $m_{W_\infty}=d-1$. Applying Theorem \ref{n4} to these we get,
$$\eta_{0,i}(\langle u_{V}\rangle_{V\in \cS})=\binom{i-1}{3}-\binom{u_{W_\infty}}{3}+\sum_{V\in\cS^{(2)}}(u_{W_\infty}-i+1)\binom{u_V}{2}+\delta_{0,i}\text{ and}$$
\begin{align*}
\eta_{1,i}&(\langle u_{V},v_{V}\rangle_{V\in \cS})=(d-i-1)\binom{i-1}{2}-v_{W_\infty}\binom{u_{W_\infty}}{2}\\
&+\sum_{V\in\cS^{(2)}}\left(u_{V}v_{V}(u_{W_\infty}-u_{V}-i+2) +(v_{W_\infty}+2v_{V}-d+i+1)\binom{u_{V}}{2}\right).
\end{align*}

We calculate $n_{f,\alpha}$ for $\alpha\in(0,2]$. If $i\in\{1,\cdots,d-1\}$, then $u_{W_\infty}=\lceil i(d-1)/d \rceil-1=i-1$ and $v_{W_\infty}=\lfloor (d-i) (d-1)/d \rfloor=d-i-1$. We get 
$n_{f,i/d}= n_{f,1+i/d}=0.$
 If $i=d$, then $u_V=m_{V}-1$, $v_V=0$, $u_{W_\infty}=d-2$ and $v_{W_\infty}=0$. We get the formulars for $n_{f,1}$ and $n_{f,2}$.

We calculate $n_{f,\alpha}$ for $\alpha\in(2,4]$. If $i\in\{1,\cdots,d-1\}$, then $u_{W_\infty}=\lfloor im_{W_\infty}/d \rfloor=i-1$ and $v_{W_\infty}=\lceil (d-i) m_{W_\infty}/d \rceil-1=d-i-1$. We get 
$n_{f,3-i/d}= n_{f,4-i/d}=0.$
 If $i=0$, then $u_V=\lfloor im_{V}/d \rfloor=0$, $v_V=\lceil (d-i) m_{V}/d \rceil-1=m_{V}-1$, $u_{W_\infty}=\lfloor im_{W_\infty}/d \rfloor=0$ and $v_{W_\infty}=\lceil (d-i) m_{W_\infty}/d \rceil-1=d-2$. We get $n_{f,3}=d-1$ and $n_{f,4}=0.$

When $\cS^{(3)}=\emptyset$, $\cS^{(2)}$ has only one element $V$ or no elements. In the case that $\cS^{(2)}=\{V\}$, $m_V=d-1$ or $d-2$. The calculation from Theorem \ref{n4} for both cases coincides with the calculation from Corollary \ref{3:1}. If $\cS^{(2)}=\emptyset$, the hyperplane arrangement is a generic case with $2\leq d\leq 4$ since $f_1$ and $f_2$ are not constant. The calculation from Theorem \ref{n4} for both cases coincides with the calculation from Corollary \ref{3:1} for any of $2\leq d\leq 4$.
This proves Corollary \ref{3:1}.\\
\rightline{$\Box$} 

\subsection{Proof of Corollary \ref{2:2}}
In the case that $1\leq s_1\leq 2$ and $1\leq s_2\leq 2$, we can apply Corollary \ref{generic} and get the result of Corollary \ref{2:2}. We may assume $s_1 > 2$. If $s_2=1$, we use the formula for $n_{g,\alpha}$ in Section \ref{3} with $\cS^{(2)}=\{V=\{x_1=x_2=0\}\}$ and $s_1=m_V=d-1$. This proves Corollary \ref{2:2} for this case. If $s_2=2$, we can apply Corollary \ref{3:1} since $f_2$ can be written as $x_3 x_4$ after a suitable change of coordinates. In this case $\cS^{(2)}=\{V=\{x_1=x_2=0\}\}$ and $s_1=m_V=d-2$. The result satisfies Corollary \ref{2:2}. Thus, we assume that $s_1>2$ and $s_2>2$. We have only two dense edges in $\cS^{(2)}$, $V_1=\{x_1=x_2=0\}\text{ and }V_2=\{x_3=x_4=0\}$. Their multiplicity $m_{V_1}$ and $m_{V_2}$ are $s_1$ and $s_2$ respectively. Notice that $\cS^{(3)}=\emptyset$. Applying Theorem \ref{n4} to these we get,
\begin{align*}
\eta_{0,i}&(\langle u_{V}\rangle_{V\in \cS})=\binom{i-1}{3}-\sum_{j=1,2}\left( (i-3)\binom{u_{V_j}}{2}-2\binom{u_{V_j}}{3}\right)+\delta_{0,i}\text{ and }\\
\eta_{1,i}&(\langle u_{V},v_{V}\rangle_{V\in \cS})=(d-i-1)\binom{i-1}{2}\\
&-\sum_{j=1,2}\left(u_{V_j}v_{V_j}(i-2) +(d-i-1-2v_{V_j})\binom{u_{V_j}}{2}\right).
\end{align*}

First, we calculate $n_{f,\frac{i}{d}}$ and $n_{f,1+\frac{i}{d}}$ with $m_{V_1}=s_1$, $m_{V_1}=s_1$, and $s_1+s_2=d$. If $gcd(s_1,s_2)=1$ and $i\in\{1,\cdots,d-1\}$, then the common factor $\lceil i s_1/d\rceil+\lceil i s_2/d\rceil-i-1$ of $n_{f,\frac{i}{d}}$ and $n_{f,1+\frac{i}{d}}$ vanishes because $\lceil i s_2/d \rceil=\lceil i (d-s_1)/d \rceil=i-\lfloor i s_1/d \rfloor$ and $gcd(s_1,d)=1$. Hence, we have $n_{f,\frac{i}{d}}=0\text{ and }n_{f,1+\frac{i}{d}}=0$.
If $gcd(s_1,s_2)=1$ and $i=d$, then we get $n_{f,1}=(s_1-1)(s_2-1)\text{ and } n_{f,2}=1-s_1 s_2$.

Similarly, we calculate $n_{f,3-\frac{i}{d}}$ and $n_{f,4-\frac{i}{d}}$. If $gcd(s_1,s_2)=1$ and $i\in\{1,\cdots,d-1\}$, then the common factor $\lfloor i s_1/d\rfloor+\lfloor i s_2/d\rfloor-i+1=\lfloor i s_1/d \rfloor-\lceil i s_1/d\rceil+1$ of $n_{f,3-\frac{i}{d}}$ and $n_{f,4-\frac{i}{d}}$ vanishes. Therefore, we have $n_{f,3-\frac{i}{d}}=0\text{ and }n_{f,4-\frac{i}{d}}=0$.
If $gcd(s_1,s_2)=1$ and $i=0$, then we get $n_{f,3}=s_1 + s_2-1\text{ and } n_{f,4}=0$. This proves Corollary \ref{2:2}.\\
\rightline{$\Box$}

\end{document}